\DeclareMathOperator{\Var}{Var}
\DeclareMathOperator{\Cov}{Cov}
\DeclareMathOperator{\Cor}{Cor}
\newcommand{\floor}[1]{\lfloor #1 \rfloor}
\newcommand{\field}[1]{\mathbb{#1}}
\newcommand{\R}{\field{R}}
\newcommand{\Z}{\field{Z}}
\newcommand{\E}{\field{E}}
\newcommand{\h}[1]{\boldsymbol{#1}}
\newcommand{\spec}[1]{\lambda({#1})}
\theoremstyle{plain}
\newtheorem{theorem}{Theorem}
\newtheorem{lemma}[theorem]{Lemma}
\theoremstyle{definition}
\theoremstyle{definition}
\newtheorem{remark}{Remark}
\newcommand{\Rmnum}[1]{\expandafter\@slowromancap\romannumeral #1@}
\newcommand{\comments}[1]{}
\newcounter{regular} \setcounter{regular}{-1}
\newcounter{stable} \setcounter{stable}{-1}
\begin{document}

\begin{frontmatter}


\title{Simultaneous Inference of Covariances}
\runtitle{Simultaneous Inference of Covariances}

\begin{aug}
\author{\fnms{Han}
  \snm{Xiao}\ead[label=e1]{hxiao@stat.rutgers.edu}}
\and
\author{\fnms{Wei Biao}
  \snm{Wu}\ead[label=e2]{wbwu@galton.uchicago.edu}}
\runauthor{H.~Xiao, W.B.~Wu}


\affiliation{Rutgers University \and University of Chicago}
\address{
501 Hill Center\\
110 Frelinghuysen Road\\
Piscataway, NJ 08854\\
\printead{e1}
}
\address{
Department of Statistics \\
5734 S.~University Ave\\
Chicago, IL  60637\\
\printead{e2}}
\end{aug}

\begin{abstract}
We consider asymptotic distributions of maximum deviations of sample covariance matrices, a fundamental problem in high-dimensional inference of covariances. Under mild dependence conditions on the entries of the data matrices, we
establish the Gumbel convergence of the maximum deviations. Our result substantially generalizes earlier ones where the entries are assumed to be independent and identically distributed, and it provides a theoretical foundation for high-dimensional simultaneous inference of covariances.
\end{abstract}

\begin{keyword}[class=AMS]
\kwd[Primary ]{62H15,62H10}
\kwd[; secondary ]{62E20}
\end{keyword}

\begin{keyword}
\kwd{Covariance matrix}
\kwd{high dimensional analysis}
\kwd{maximal deviation}
\kwd{tapering}
\kwd{test for bandedness}
\kwd{test for covariance structure}
\kwd{test for stationarity}
\end{keyword}



\end{frontmatter}
\date{}

\renewcommand{\labelenumi}{(\roman{enumi})}

\section{Introduction}

Let $\boldsymbol{X}_n=\left(X_{ij}\right)_{1\leq i \leq n, 1\leq j
\leq m}$ be a data matrix whose $n$ rows form independent samples
from some population distribution with mean vector $\h{\mu}_n$ and
covariance matrix $\Sigma_n$. High dimensional data increasingly
occur in modern statistical applications in biology, finance and
wireless communication, where the dimension $m$ may be comparable
to the number of observations $n$, or even much larger than $n$.
Therefore, it is necessary to study the asymptotic behavior of
statistics of $\boldsymbol{X}_n$ under the setting that $m=m_n$
grows to infinity as $n$ goes to infinity.

In many empirical examples, it is often assumed that $\Sigma_n=I_m$,
where $I_m$ is the $m\times m$ identity matrix, so it is important to
perform the test
\begin{equation}
  \label{eq:1.test}
  H_0:\; \Sigma_n=I_m
\end{equation}
before carrying out further estimation or inference procedures.
Due to high dimensionality, conventional tests often do not work
well or cannot be implemented. For example, when $m>n$, the
likelihood ratio test (LRT) cannot be used because the sample
covariance matrix is singular; and even when $m<n$, the LRT is
drifted to infinity and lead to many false rejections if $m$ is
also large \citep{bai:2009}. \cite{ledoit:2003} found that the
empirical distance test \citep{nagao:1973} is not consistent when
both $m$ and $n$ are large. The problem has been studied by
several authors under the ``large $n$, large $m$'' paradigm.
\cite{bai:2009} and \cite{ledoit:2003} proposed corrections to the
LRT and the empirical distance test respectively. Assuming that 
the population distribution is Gaussian with $\h{\mu}_n=0$,
\cite{johnstone:2001} used the largest eigenvalue of the sample
covariance matrix $\boldsymbol{X}_n^\top\boldsymbol{X}_n$ as the
test statistic, and proved that its limiting distribution follows
the Tracy-Widom law \citep{tracy:1994}. Here we use the
superscript $\top$ to denote the transpose of a matrix or a
vector. His work was extended to the non-Gaussian case by
\cite{soshnikov:2002} and \cite{peche:2009}, where they assumed
the entries of $\boldsymbol{X}_n$ are independent and identically
distributed (i.i.d.) with sub-Gaussian tails.

Let $x_{1},x_{2},\ldots,x_{m}$ be the $m$ columns of
$\boldsymbol{X}_n$. In practice, the entries of the mean vector
$\h{\mu}_n$ are often unknown, and are estimated by $\bar x_i =
(1/n)\sum_{k=1}^n X_{ki}$. Write $x_i-\bar x_i$ for the vector
$x_i-\bar x_i \h{1}_n$, where $\h{1}_n$ is the $n$-dimensional vector
with all entries being one. Let $\sigma_{ij} = {\rm Cov}(X_{1 i},
X_{1 j})$, $1 \le i, j \le m$, be the covariance function, namely,
the $(i, j)$th entry of $\Sigma_n$. The sample covariance between 
columns $x_{i}$ and $x_{j}$ is defined as
\begin{equation*}
  \hat\sigma_{ij} = \frac{1}{n}{(x_{i}-\bar x_{i})^\top(x_{j}-\bar x_{j})}.
\end{equation*}
In high-dimensional covariance inference, a fundamental problem is to establish an asymptotic distributional theory for the maximum deviation
\begin{equation*}
  M_n = \max_{1\leq i<j\leq m} |\hat\sigma_{ij}-\sigma_{ij}|.
\end{equation*}
With such a distributional theory, one can perform statistical inference for structures of covariance matrices. For example, one can use $M_n$ to test the null hypothesis $H_0:\;\Sigma_n = \Sigma^{(0)}$, where $\Sigma^{(0)}$ is a pre-specified matrix. Here the null hypothesis can be that the population distribution is a stationary process so that $\Sigma_n$ is Toeplitz, or that $\Sigma_n$ has a banded structure.

It is very challenging to derive an asymptotic theory for $M_n$ if we allow dependence among $X_{11}, \ldots, X_{1 m}$. Many of the earlier results assume that the entries of the data matrix $\boldsymbol{X}_n$ are i.i.d.. In this case $\sigma_{i j} = 0$ if $i \not= j$. \cite{jiang:2004} derived the asymptotic distribution of
\begin{equation*}
  L_n  = \max_{1\leq i<j\leq m} |\hat\sigma_{ij}|.
\end{equation*}

\begin{theorem}[\citealp{jiang:2004}]
  \label{thm:jiang}
  Suppose $X_{i,j},\;i,j=1,2,\ldots$ are independent and identically
  distributed as $\xi$ which has variance one. Suppose $\E
  |\xi|^{30-\epsilon}<\infty$ for any $\epsilon>0$. If
  $n/m\rightarrow c\in(0,\infty)$, then for any $y\in\R$,
  \begin{equation*}
    \lim_{n\rightarrow\infty}P\left(nL_n^2 - 4\log m + \log(\log m) + \log(8\pi) \leq y\right) = \exp\left(-e^{-y/2}\right).
  \end{equation*}
\end{theorem}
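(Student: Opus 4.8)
\medskip
\noindent\textit{Proof plan.} The plan is to recast the event $\{nL_n^2\le t\}$ as the statement that none of the $\binom m2$ off-diagonal sample covariances is large, and to establish the limit by a Poisson approximation carried out with the Chen--Stein method. Set
\begin{equation*}
  t_m=t_m(y)=4\log m-\log\log m-\log(8\pi)+y,
\end{equation*}
so that the probability in the theorem is $\p(nL_n^2\le t_m)$. First I would discard the sample-mean centering. Writing $W_{ij}=n^{-1}\sum_{k=1}^n X_{ki}X_{kj}$, we have $\hat\sigma_{ij}=W_{ij}-\bar x_i\bar x_j$, and since $\max_{1\le i\le m}|\bar x_i|=O_{\p}(\sqrt{\log m/n})$ a short computation gives $\big|\,nL_n^2-n\max_{i<j}W_{ij}^2\,\big|=O_{\p}\big((\log m)^{3/2}n^{-1/2}\big)=o_{\p}(1)$ under $n/m\to c$. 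Hence it suffices to prove the limit for $\max_{i<j}W_{ij}^2$. Introduce the exceedance events $A_{ij}=\{nW_{ij}^2>t_m\}$ and the count $N_n=\sum_{1\le i<j\le m}\mathbf 1_{A_{ij}}$; then $\p(nL_n^2\le t_m)=\p(N_n=0)+o(1)$, and I will obtain the result by showing $N_n\Rightarrow\mathrm{Poisson}(\lambda)$ with $\lambda=e^{-y/2}$, so that $\p(N_n=0)\to e^{-\lambda}=\exp(-e^{-y/2})$.

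For the Poisson mean, note $\E N_n=\binom m2\,\p(A_{12})=\binom m2\,\p\big(\sqrt n\,|W_{12}|>\sqrt{t_m}\big)$. The summands $Y_k=X_{k1}X_{k2}$ are i.i.d.\ with mean $0$ and variance $1$, so $\sqrt n\,W_{12}=n^{-1/2}\sum_k Y_k$ is a normalized sum, and the computation needs a Cram\'er-type moderate-deviation estimate
\begin{equation*}
  \p\big(\sqrt n\,|W_{12}|>x\big)=\big(1+o(1)\big)\,2\big(1-\Phi(x)\big),
\end{equation*}
uniformly for $x$ in a neighbourhood of $\sqrt{t_m}\sim 2\sqrt{\log m}$. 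Plugging in $1-\Phi(x)\sim\phi(x)/x$ together with $e^{-t_m/2}=\sqrt{8\pi\log m}\,m^{-2}e^{-y/2}$, $\sqrt{t_m}\sim2\sqrt{\log m}$ and $\binom m2\sim m^2/2$ yields $\E N_n\to e^{-y/2}=\lambda$.

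Next I would control the dependence. Declare $(i,j)\sim(k,l)$ when $\{i,j\}\cap\{k,l\}\ne\emptyset$; pairs with four distinct indices involve independent columns, so their events are independent and each pair has only $O(m)$ neighbours. The total-variation distance from $N_n$ to $\mathrm{Poisson}(\E N_n)$ is bounded by $b_1+b_2$, where $b_1=\sum_{(i,j)}\sum_{(k,l)\sim(i,j)}\p(A_{ij})\p(A_{kl})$ and $b_2=\sum_{(i,j)}\sum_{(k,l)\sim(i,j),\,(k,l)\ne(i,j)}\p(A_{ij}\cap A_{kl})$, the remaining Chen--Stein term vanishing because $A_{ij}$ is independent of $\{A_{kl}:(k,l)\not\sim(i,j)\}$. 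Since $\p(A_{ij})=O(m^{-2})$, we get $b_1=O(m^2\cdot m\cdot m^{-4})=O(m^{-1})$. For $b_2$ the decisive input is a joint-tail bound for two covariances sharing an index, say $\p(A_{12}\cap A_{13})$: conditioning on the column $x_1$ makes $\sqrt n\,W_{12}$ and $\sqrt n\,W_{13}$ conditionally independent, each a weighted sum of the entries of $x_2$, resp.\ $x_3$, with conditional variance $n^{-1}\sum_k X_{k1}^2\to1$, and a conditional moderate-deviation estimate gives $\p(A_{12}\cap A_{13})=O(m^{-4+o(1)})$ uniformly. With $O(m^3)$ ordered neighbouring pairs this yields $b_2=O(m^{-1+o(1)})\to0$. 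Hence $b_1+b_2\to0$, so $N_n\Rightarrow\mathrm{Poisson}(\lambda)$ and $\p(N_n=0)\to\exp(-e^{-y/2})$.

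The hard part will be the moderate-deviation estimate, and its conditional analogue, invoked in the two displays above: one must show that the tail of $\sqrt n\,W_{12}=n^{-1/2}\sum_k X_{k1}X_{k2}$ matches the Gaussian tail $2(1-\Phi(x))$ with vanishing relative error uniformly out to $x\asymp2\sqrt{\log m}\asymp2\sqrt{\log n}$, even though the summands $X_{k1}X_{k2}$ are products with substantially heavier tails than $\xi$. I would truncate the products at a small power of $n$, apply an exponential-tilting expansion to the truncated centred sum, and bound both the truncation remainder and the probability that some summand exceeds the threshold; it is essentially this last control, after multiplication by the $\binom m2\asymp n^2$ pairs, that forces the moment assumption $\E|\xi|^{30-\epsilon}<\infty$, while pinning down the additive constant $\log(8\pi)$ (rather than merely the rate $4\log m$) demands the sharp first-order form of the expansion. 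By comparison, the centering removal and the Chen--Stein bookkeeping are routine.
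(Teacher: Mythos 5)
This statement is quoted from \citet{jiang:2004}; the present paper offers no proof of it, so there is no internal argument to check yours against line by line. Your plan is, in essence, the route of the original source: drop the sample-mean centering, form the exceedance count $N_n$ over the $\binom m2$ pairs, and prove Poisson convergence by Chen--Stein, with a Cram\'er-type moderate-deviation estimate for $n^{-1/2}\sum_k X_{k1}X_{k2}$ (and its conditional version given a shared column) supplying both $\E N_n\to e^{-y/2}$ and the $b_2$ bound. That plan is sound, and you correctly identify the moderate-deviation estimate at level $x\asymp 2\sqrt{\log n}$ as the real work; the centering removal and the $b_1,b_2$ bookkeeping are indeed routine. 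The paper, in proving its generalization (Theorem~\ref{thm:max_cov}), reaches the same Poisson limit by a genuinely different mechanism: after truncation it applies Zaitsev's multivariate Gaussian approximation (Lemma~\ref{thm:zaitsev}) to each fixed $d$-tuple of pairs, uses a normal comparison principle (Lemma~\ref{thm:normal_comparison}) to compute the limits of the factorial moments $Q_{n,d}\to e^{-dz}/d!$, and concludes by the method of moments (Lemma~\ref{thm:poisson}) rather than by Chen--Stein. The trade-off is instructive: your route leans on independence of the columns (conditional independence of $W_{12}$ and $W_{13}$ given $x_1$) and, as you note, the truncation needed to push the Gaussian tail approximation out to $x\sim 2\sqrt{\log n}$ uniformly over $\asymp n^2$ pairs is what costs the $30-\epsilon$ moments; the Gaussian-approximation-plus-comparison route never needs a one-dimensional Cram\'er expansion, tolerates dependence among the columns, and under $m\asymp n$ requires only about $8+\delta$ moments (condition (A4$'$) with $q=1$), so it recovers Jiang's conclusion under weaker hypotheses. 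One caveat: the conditional moderate-deviation bound behind $P(A_{12}\cap A_{13})=O(m^{-4+o(1)})$ needs uniform control of the conditional variances $n^{-1}\sum_k X_{k1}^2$ and of the conditional Lyapunov ratios over all $m$ columns simultaneously, which is precisely where the delicate truncation enters; as written this step is a stated target rather than a proof, so your proposal is an accurate outline of the known argument rather than a complete one.
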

Jiang's work has attracted considerable attention, and been followed by
\cite{li:2010}, \cite{liu:2008}, \cite{zhou:2007} and \cite{li:2006}.
Under the same setup that $\boldsymbol{X}_n$ consists of
i.i.d. entries, these works focus on three directions (i) reduce the
moment condition; (ii) allow a wider range of $p$; and (iii) show that
some moment condition is necessary. In a recent article,
\cite{cai:2011a} extended those results in two ways: (i) the dimension
$p$ could grow exponentially as the sample size $n$ provided
exponential moment conditions; and (ii) they showed that the test
statistic $\max_{|i-j|>s_n} |\hat \sigma_{ij}|$ also converges to the
Gumbel distribution if each row of $\boldsymbol{X}_n$ is Gaussian
and is $s_n$-dependent. The latter generalization is important since
it is one of the very few results that allow dependent entries.

In this paper we shall show that a self-normalized version of $M_n$
converges to the Gumbel distribution under mild dependence conditions
on the vector $(X_{11}, \ldots, X_{1 m})$. Thus our result provides a
theoretical foundation for high-dimensional simultaneous inference of
covariances.

The rest of this article is organized as follows. We present the main
result in Section~\ref{sec:main}. In Section~\ref{sec:example}, we use
two examples on linear processes and nonlinear processes to
demonstrate that the technical conditions are easily satisfied. We
discuss three tests for the covariance structure using our main result
in Section~\ref{sec:cov_structure}. The proof is given in
Section~\ref{sec:proof}, and some auxiliary results are collected in
Section~\ref{sec:auxiliary}.

\section{Main result}
\label{sec:main} We consider a slightly more general situation
where population distribution can depend on $n$.  Let $\h{X}_n =
(X_{n,k,i})_{1\leq k\leq n, 1\leq i \leq m}$ be a data matrix
whose $n$ rows are i.i.d. $m$-dimensional random vectors with mean
$\h{\mu}_n=(\mu_{n,i})_{1\leq i \leq m}$ and covariance matrix
$\Sigma_n=(\sigma_{n,i,j})_{1\leq i,j \leq m}$. Let
$x_{1},x_{2},\ldots,x_{m}$ be the $m$ columns of
$\boldsymbol{X}_n$. Let $\bar x_i = (1/n)\sum_{k=1}^n X_{n,k,i}$,
and write $x_i-\bar x_i$ for the vector $x_i-\bar x_i \h{1}_n$.
The sample covariance between $x_{i}$ and $x_{j}$ is defined as
\begin{equation*}
  \hat\sigma_{n,i,j}
  = \frac{1}{n}{(x_{i}-\bar x_{i})^\top(x_{j}-\bar x_{j})}.
\end{equation*}
It is unnatural to study the maximum of a collection of random
variables which are on different scales, so we consider the
normalized version
$|\hat\sigma_{n,i,j}-\sigma_{n,i,j}|/\tau_{n,i,j}$, where
\begin{equation*}
  \tau_{n,i,j}
  = \Var\left[(X_{n,1,i}-\mu_{n,i})(X_{n,1,j}-\mu_{n,j})\right].
\end{equation*}
In practice, $\tau_{n,i,j}$ are usually unknown, and can be estimated by
\begin{equation*}
  \hat\tau_{n,i,j}
  = \frac{1}{n} \left|(x_{i}-\bar x_{i})\circ(x_{j}-\bar x_{j})
  -\hat\sigma_{n,i,j}\cdot \h{1}_n\right|^2.
\end{equation*}
where $\circ$ denotes the Hadamard product defined as $A\circ
B:=(a_{ij}b_{ij})$ for two matrices $A=(a_{ij})$ and $B=(b_{ij})$ with
the same dimensions. We thus consider
\begin{equation}
  \label{eq:max_cov}
  M_n = \max_{1 \leq i<j \leq m} \frac{|\hat\sigma_{n,i,j}-\sigma_{n,i,j}|}{\sqrt{\hat\tau_{n,i,j}}}.
\end{equation}
Due to the normalization procedure, we can assume without loss of
generality that $\sigma_{n,i,i}=1$ and $\mu_{n,i}=0$ for each $1\leq i
\leq m$.

Define the index set $\mathcal{I}_n=\{(i,j):\,1\leq i<j \leq m\}$, and
for $\alpha=(i,j)\in\mathcal{I}_n$, let
$X_{n,\alpha}:=X_{n,1,i}X_{n,1,j}$. Define
\begin{align*}
  & \mathcal{K}_n(t,p)
   = \sup_{1\leq i \leq m} \E \exp\left(t |X_{n,1,i}|^p\right), \\
  & \mathcal{M}_n(p)
   = \sup_{1\leq i \leq m} \E (|X_{n,1,i}|^{p}), \\
  & \tau_n = \inf_{1 \leq i<j \leq m} \tau_{n,i,j}, \\
  & \gamma_n = \sup_{\alpha, \beta \in\mathcal{I}_n
  \hbox{ {\tiny and} } \alpha\neq\beta}
  \left|\Cor(X_{n,\alpha},X_{n,\beta})\right|,  \\
  & \gamma_n(b)=\sup_{\alpha\in\mathcal{I}_n}
  \sup_{\mathcal{A}\subset\mathcal{I}_n,|\mathcal{A}|=b}
  \inf_{\beta \in \mathcal{A}}
    \left|\Cor(X_{n,\alpha},X_{n,\beta})\right|.
\end{align*}
We need the following technical conditions.
\begin{align*}
  & (\h{\mathrm{A1}}).\quad \liminf_{n \rightarrow\infty} \tau_n > 0. \\
  & (\h{\rm A2}).\quad \limsup_{n} \gamma_n<1. \\
  & (\h{\rm A3}).\quad \gamma_n(b_n) \cdot (\log b_n) = o(1)
  \hbox{ for any sequence $(b_n)$ such that $b_n\rightarrow\infty$.}\\
  & (\h{\rm A3'}).\quad \gamma_n(b_n) = o(1)
  \hbox{ for any sequence $(b_n)$ such that $b_n\rightarrow\infty$, and}\\
  & \qquad\qquad \sum_{\alpha,\beta\in\mathcal{I}_n}
  \left[\Cov(X_{n,\alpha},X_{n,\beta})\right]^2 = O(m^{4-\epsilon})
  \hbox{for some constant $\epsilon>0$}. \\
  & (\h{\rm A4}).\quad \log m = o\left(n^{p/(4+2p)}\right) \hbox{ and
  }
  \limsup_{n\rightarrow\infty}\mathcal{K}_n(t,p) < \infty \hbox{ for some constants} \\
  & \qquad \quad\;\;\;\hbox{$t>0$ and $0<p\leq 4$}. \\
  & (\h{\rm A4'}).\quad m=O(n^q) \hbox{ and }
  \limsup_{n\rightarrow\infty}\mathcal{M}_n(4q+4+\delta)<\infty \hbox{
    for some constants}\\
  & \qquad \quad\;\;\;\hbox{$q>0$ and $\delta>0$}.
\end{align*}
The two conditions ($\mathrm{A3}$) and ($\mathrm{A3'}$) require that
the dependence among $X_{n,\alpha},\;\alpha\in\mathcal{I}_n$, are not
too strong. They are translations of (B1) and (B2) in
Section~\ref{sec:normal_comparison} (see Remark~\ref{rk:empirical} for
some equivalent versions), and either of them will make our results
valid. We use (A2) to get rid of the case where they may be lots of
pairs $(\alpha,\beta)\in\mathcal{I}_n$ such that $X_{n,\alpha}$ and
$X_{n,\beta}$ are perfectly correlated. Assumptions
($\mathrm{A4}$) and ($\mathrm{A4'}$) connect the growth speed of $m$
relative to $n$ and the moment conditions. They are typical 
in the context of high dimensional covariance matrix estimation. 
Condition (A1) excludes the case that $X_{n,\alpha}$ is a constant.

\begin{theorem}
\label{thm:max_cov}
Suppose that $\h{X}_n = (X_{n,k,i})_{1\leq k\leq n, 1\leq i \leq m}$ 
is a data matrix whose $n$ rows are i.i.d. $m$-dimensional random
vectors, and whose entries have mean zero and variance one. Assume
(A1), (A2), either of ($\mathrm{A3}$) and ($\mathrm{A3'}$), and
either of ($\mathrm{A4}$) and ($\mathrm{A4'}$), then for any $y \in \R$,
  \begin{equation*}
    \lim_{n\rightarrow\infty}
    P\left(nM_n^2 - 4\log m + \log(\log m)
    + \log(8\pi) \leq y\right)
    = \exp\left(-e^{-y/2}\right).
  \end{equation*}
\end{theorem}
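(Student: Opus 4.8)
The plan is to reduce the self-normalized maximum $M_n$ to the maximum of $N=\binom{m}{2}\sim m^2/2$ approximately Gaussian statistics and then to establish the extreme-value limit through a normal comparison argument. Writing $\alpha=(i,j)\in\mathcal{I}_n$ and letting $X_{n,\alpha}^{(k)}=X_{n,k,i}X_{n,k,j}$ denote the product from the $k$-th row (so that $X_{n,\alpha}=X_{n,\alpha}^{(1)}$ and $\E X_{n,\alpha}=\sigma_{n,i,j}$), I would first expand
\[
  \hat\sigma_{n,i,j}-\sigma_{n,i,j}
  =\frac{1}{n}\sum_{k=1}^{n}\bigl(X_{n,\alpha}^{(k)}-\E X_{n,\alpha}\bigr)-\bar x_i\bar x_j,
\]
and show that the sample-mean correction is uniformly negligible: since $\mu_{n,i}=0$ we have $\max_i|\bar x_i|=O_p(\sqrt{\log m/n})$, so $\sqrt{n}\,\max_{i,j}|\bar x_i\bar x_j|=O_p(\log m/\sqrt{n})$, which under (A4)/(A4') is of smaller order than the precision $o((\log m)^{-1/2})$ needed so that squaring to form $nM_n^2$ does not perturb the limit. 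In parallel I would replace the estimated normalizer $\hat\tau_{n,\alpha}$ by the population quantity $\tau_{n,\alpha}$, using (A1) to keep $\tau_n$ bounded away from zero together with a concentration bound for $\hat\tau_{n,\alpha}$ that holds uniformly over all $O(m^2)$ pairs; the moment hypotheses are exactly what furnish such a uniform bound.

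After these reductions it suffices to analyze $\max_{\alpha}T_{n,\alpha}^2$, where
\[
  T_{n,\alpha}=(n\tau_{n,\alpha})^{-1/2}\sum_{k=1}^{n}\bigl(X_{n,\alpha}^{(k)}-\E X_{n,\alpha}\bigr)
\]
is the standardized partial sum, asymptotically $N(0,1)$ for each fixed $\alpha$. The core of the argument is a high-dimensional Gaussian approximation: replace the vector $(T_{n,\alpha})_{\alpha\in\mathcal{I}_n}$ by a centered Gaussian vector $(Z_\alpha)$ with the matching covariance structure $\Cor(Z_\alpha,Z_\beta)=\Cor(X_{n,\alpha},X_{n,\beta})$. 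Because there are $N\asymp m^2$ coordinates and $m$ may grow almost exponentially in $n$ under (A4), this cannot be a coordinatewise central limit theorem; it requires a Berry--Esseen-type control of the probabilities of the large cubes $\{\max_\alpha|T_{n,\alpha}|\le u\}$ with error uniform in $\alpha$. Since the products $X_{n,\alpha}$ have heavier tails than the original entries, this is where the exponents $p$ in (A4) and $4q+4+\delta$ in (A4') enter, typically via truncation at a level of order $\sqrt{n/\log m}$ followed by a Cram\'er-type moderate-deviation estimate for the truncated sums.

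With the Gaussian surrogate in hand, I would invoke a normal comparison (Berman-type) inequality to show that $\max_\alpha Z_\alpha^2$ has the same limit as the maximum of $N$ \emph{independent} standard normals. This is exactly where (A3) and (A3') are used, through their translations (B1) and (B2) in the normal-comparison section: the comparison error is a sum over pairs weighted by $\exp(-u^2/(1+|\rho_{\alpha\beta}|))$, and either $\gamma_n(b_n)\log b_n=o(1)$ in (A3) or the bound $\sum_{\alpha,\beta}\Cov(X_{n,\alpha},X_{n,\beta})^2=O(m^{4-\epsilon})$ in (A3') forces this error to vanish at the relevant level $u^2\sim4\log m$; assumption (A2), keeping $\gamma_n$ away from $1$, prevents near-perfectly-correlated pairs from spoiling the exponent $u^2/(1+\rho)$. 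Finally a direct computation with $N\sim m^2/2$ and the tail $P(Z^2>w)\sim\sqrt{2/(\pi w)}\,e^{-w/2}$ shows that $w=4\log m-\log\log m-\log(8\pi)+y$ gives $N\,P(Z^2>w)\to e^{-y/2}$, which yields the stated Gumbel limit $\exp(-e^{-y/2})$.

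The step I expect to be the main obstacle is the Gaussian approximation over the $\sim m^2$ correlated and heavy-tailed coordinates: producing an error bound for the maximum that is uniform across all pairs, sharp enough to survive at the extreme threshold $u^2\sim4\log m$, and still compatible with the near-exponential growth of $m$ allowed by (A4), is considerably more delicate than the subsequent normal-comparison and extreme-value computations.
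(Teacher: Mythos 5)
Your overall architecture --- reduce to standardized partial sums by showing the estimated means and variances are negligible, pass to a Gaussian surrogate, apply a normal comparison principle under (A3)/(A3$'$), and finish with the tail calculation $N\,P(Z^2>w)\to e^{-y/2}$ for $N\sim m^2/2$ --- matches the paper's in outline, and your reduction steps and final computation are correct. But the step you yourself flag as the main obstacle is a genuine gap, and it is precisely the point where the paper does something different from what you propose. You ask for a \emph{global} Gaussian approximation of the entire $\binom{m}{2}$-dimensional vector $(T_{n,\alpha})_{\alpha\in\mathcal{I}_n}$, uniform over large cubes at the threshold $u^2\sim 4\log m$. No mechanism is given for producing such a bound, and under (A4), where $\log m$ may grow like $n^{p/(4+2p)}$ (up to $n^{1/3}$), the known uniform Berry--Esseen or Kolmogorov-distance bounds for maxima of $O(m^2)$ correlated, heavy-tailed coordinates are not sharp enough to cover that regime; a Cram\'er-type moderate deviation expansion for each coordinate separately controls marginal tails but does not by itself control the \emph{joint} distribution of the maximum, which is what your plan requires.

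The paper sidesteps this entirely via the moment method (Lemma~\ref{thm:poisson}): it never approximates the full vector in distribution. Instead, for each \emph{fixed} $d$ it considers the sums $Q_{n,d}=\sum_{|A|=d}P(|y_A|_\bullet>z_n)$ over $d$-element subsets $A\subset\mathcal{I}_n$, applies Zaitsev's $d$-dimensional Gaussian approximation (Lemma~\ref{thm:zaitsev}) to each fixed-dimensional marginal $y_A$, and exploits the fact that Zaitsev's error $C_d\exp\{-(\log s_n)\delta_n^{-1/2}\}$ is small enough to beat the combinatorial factor $s_n^d$ when summed over all subsets. The limit of each $Q_{n,d}$ is then computed by the normal comparison lemma, and convergence of all factorial moments yields the Poisson (hence Gumbel) limit. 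This is the key idea missing from your proposal: replacing one intractable $m^2$-dimensional approximation by infinitely many tractable $d$-dimensional ones, each with an exponentially small error. Without it (or an equally strong substitute), your argument does not go through at the generality of (A4). Separately, note that the paper must also carry out a truncation step \emph{before} invoking Zaitsev, since the Bernstein-class condition $\mathscr{B}(d,\tau)$ requires bounded-type moment growth of the products $X_{n,k,i}X_{n,k,j}$; you mention truncation only in passing, but it is what makes the Gaussian approximation and the uniform concentration of $\hat\tau_{n,i,j}$ legitimate.
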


\section{Examples}
\label{sec:example}

Except for ($\mathrm{A4}$) and ($\mathrm{A4'}$), which put
conditions on every single entry of the random vector
$(X_{n,1,i})_{1\leq i \leq m}$, all the other conditions of
Theorem~\ref{thm:max_cov} are related to the dependence among
these entries, which can be arbitrarily complicated. In this
section we shall provide examples which satisfy the four conditions
(A1), (A2), ($\mathrm{A3}$) and ($\mathrm{A3'}$). Observe that if
each row of $\h{X}_n$ is a random vector with uncorrelated entries
(specifically, the entries are independent), then all these 
conditions are automatically satisfied. They are also satisfied
if the number of non-zero covariances is bounded.

\subsection{Stationary Processes}
Suppose $(X_{n,k,i})=(X_{k,i})$, and each row of $(X_{k,i})_{1\leq i
  \leq m}$ is distributed as a stationary process $(X_i)_{1\leq i \leq
  m}$ of the form
\begin{align*}
  X_i=g(\epsilon_i,\epsilon_{i-1},\ldots)
\end{align*}
where $\epsilon_i$'s are i.i.d. random variables, and $g$ is a
measurable function such that $X_i$ is well-defined. Let
$(\epsilon_i')_{i\in\Z}$ be an i.i.d. copy of $(\epsilon_i)_{i\in\Z}$,
and
$X_i'=g(\epsilon_i,\ldots,\epsilon_1,\epsilon_0',\epsilon_{-1},\epsilon_{-2},\ldots)$. Following
\cite{wu:2005}, define the {\it physical dependence measure} of order
$p$ by
\begin{align*}
  \delta_p(i)=\|X_i-X_i'\|_p.
\end{align*}
Define the squared tail sum
\begin{align*}
  \Psi_p(k)=\left[\sum_{j=k}^\infty (\delta_p(i))^2\right]^{1/2},
\end{align*}
and use $\Psi_p$ as a shorthand for $\Psi_p(0)$.

We give sufficient conditions for (A1), (A2), ($\mathrm{A3}$) and
($\mathrm{A3'}$) in the following lemma and leave its proof to the
supplementary file.
\begin{lemma}
  \label{thm:stationary}
  \begin{itemize}
  \item [(i)] If $0<\Psi_4<\infty$ and $\Var(X_iX_j)>0$ for
  all $i,j\in\Z$, then (A1) holds.
  \item [(ii)] If in addition, $|\Cor(X_iX_j,X_kX_l)|<1$ for all
    $i,j,k,l$ such that they are not all the same, then (A2) holds.
  \item [(iii)] Assume that the conditions of (i) and (ii) hold. If
    $\Psi_p(k)=o(1/\log k)$ as $k\rightarrow\infty$, then
    ($\mathrm{A3}$) holds. If $\sum_{j=0}^m (\Psi_4(j))^2 =
    O(m^{1-\delta})$ for some $\delta>0$, then ($\mathrm{A3'}$) holds.
  \end{itemize}
\end{lemma}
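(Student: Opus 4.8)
The plan is to use stationarity to reduce each requirement to a statement about the single fixed process $(X_i)$, and then to control all the relevant correlations through the physical dependence measure. Since $(X_{n,k,i})=(X_{k,i})$ does not depend on $n$, translation invariance gives $\tau_{n,i,j}=\Var(X_0X_{j-i})$, shows that $\limsup_n\gamma_n$ equals the supremum of $|\Cor(X_iX_j,X_kX_l)|$ over all distinct configurations, and yields $\gamma_n(b)\le\gamma_\infty(b):=\sup_\alpha(\text{$b$-th largest }|\Cor(X_\alpha,X_\beta)|\text{ over }\beta)$. For (i) I would first show $\Var(X_0X_h)\to1$ as $h\to\infty$: writing this variance as $\E[X_0^2X_h^2]-\gamma(h)^2$ with $\gamma(h)=\E[X_0X_h]$, the coupling defining $\delta_4$ forces $\E[X_0^2X_h^2]\to\E X_0^2\,\E X_h^2=1$ and $\gamma(h)\to0$, both with errors controlled by $\Psi_4(h)\to0$. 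As every $\Var(X_0X_h)>0$ by hypothesis and the tail converges to $1$, the infimum over $h\ge1$ is strictly positive, which is exactly (A1).

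For (ii) the key device is the exact fourth-moment expansion for mean-zero variables,
\[
\Cov(X_iX_j,X_kX_l)=\gamma(k-i)\gamma(l-j)+\gamma(l-i)\gamma(k-j)+\Cum(X_i,X_j,X_k,X_l),
\]
so each pair correlation is a bounded combination of lagged autocovariances plus a fourth cumulant. I would then argue by compactness: if $\limsup_n\gamma_n=1$, choose distinct configurations with $|\Cor|\to1$, normalize by translation, and pass to a subsequence along which every gap converges in $\{0,1,\dots\}\cup\{\infty\}$. The coupling bounds make the four variables factorize across any block separated by an infinite gap, so the limiting correlation is the correlation of two genuine products of $X$'s; it equals $1$ only if those two limiting products are almost surely proportional, which forces a perfect linear relation among finitely many $X_i$ and hence a distinct configuration with correlation $1$, contradicting the hypothesis of (ii). Therefore $\limsup_n\gamma_n<1$, i.e.\ (A2).

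For (iii) I would record, from the expansion above together with (A1) (so that $|\Cor|\le C|\Cov|$) and the coupling estimate $|\gamma(d)|\le C\Psi_4(\lceil|d|/2\rceil)$, that $|\Cor(X_\alpha,X_\beta)|$ is large only for $\beta$ in a bounded neighborhood of $\alpha$; concretely the number of $\beta$ with $|\Cor(X_\alpha,X_\beta)|\ge\eta$ is $O((\Psi_4^{-1}(c\eta))^2)$. Hence the $b$-th largest correlation obeys $\gamma_\infty(b)\le C\Psi_4(c\sqrt b)$, and under $\Psi_p(k)=o(1/\log k)$ one gets $\gamma_\infty(b)\log b\to0$ (the polynomial degree of the count is harmless because $\log\sqrt b\sim\tfrac12\log b$), which is (A3). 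The same bound gives $\gamma_\infty(b_n)=o(1)$ for (A3$'$). For the sum in (A3$'$) I would group the double sum by the shape $(a,c,d)$ with $a=j-i$, $c=l-k$, $d=k-i$, each shape having $O(m)$ translates, so $\sum_{\alpha,\beta}[\Cov]^2\le Cm\sum_{a,c,d}[\Cov(X_0X_a,X_dX_{d+c})]^2$; inserting the expansion, a typical term is $\sum_{a,c,d}\gamma(d)^2\gamma(d+c-a)^2=O(m)\,(\sum_{|d|\le m}\gamma(d)^2)^2$, and $\sum_{|d|\le m}\gamma(d)^2\le C\sum_{j=0}^m\Psi_4(j)^2=O(m^{1-\delta})$, so the whole sum is $O(m^{4-2\delta})$, with the fourth-cumulant contribution treated identically. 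This is (A3$'$) with $\epsilon=2\delta$.

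The main obstacle is the uniform decay estimate $|\Cov(X_iX_j,X_kX_l)|\le C\Psi_4(s)$ underlying both (ii) and (iii): it must hold for every relative position of the four indices, including the cases where the two pairs overlap, share an index, or interleave, and it requires bounding the fourth cumulant $\Cum(X_i,X_j,X_k,X_l)$ by the physical dependence measure through a four-fold coupling of the innovations (the content I would defer to Section~\ref{sec:auxiliary}). Once this estimate and the companion counting bound are in place, the deductions of (A1)--(A3$'$) become routine bookkeeping matched to the stated decay rates of $\Psi_4$.
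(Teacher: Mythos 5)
The paper itself relegates the proof of this lemma to a supplementary file, so no line-by-line comparison is possible; but your argument runs exactly parallel to the computation the paper carries out in Section~\ref{sec:example} for non-stationary linear processes (the same decay bound on $\Cov(X_iX_j,X_kX_l)$ in terms of a tail quantity, the same $O(b^2)$ count of pairs $\{k,l\}$ inside $B(i;b)\cup B(j;b)$, the same $\min\{1,1+\kappa_4/2\}$-type variance lower bounds), so this is essentially the intended approach and the outline is correct. Two caveats. First, the pivotal estimate --- that $|\Cov(X_iX_j,X_kX_l)|\leq C\Psi_4(b)$ whenever $\{k,l\}\not\subset B(i;b)\cup B(j;b)$, including the cumulant contribution --- is the entire content of the lemma once the bookkeeping is set up, and you defer it to Section~\ref{sec:auxiliary}, which contains no such result (only the normal comparison, Zaitsev, and Poisson lemmas); it must be proved directly via the coupling $\|X_k-X_k^*\|_4\leq\Psi_4(\cdot)$, and for a one-sided causal representation the case analysis over the relative order of $i,j,k,l$ is not entirely free. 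Second, in (ii) the compactness step needs the limit configurations enumerated: when a gap diverges the limiting correlation is a product of autocorrelations $\gamma(d)\gamma(e)$ (or zero), and one must check that $|\gamma(h)|=1$ for some $h\neq 0$ already produces a genuine distinct configuration with $|\Cor(X_iX_j,X_kX_l)|=1$, while $d=e=0$ is excluded by $\alpha\neq\beta$; your phrase ``forces a perfect linear relation'' is the right idea but glosses over this.
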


\begin{remark}
Let $g$ be a linear function with $g(\epsilon_i, \epsilon_{i-1},
\ldots) = \sum_{j=0}^\infty a_j \epsilon_{i-j}$, where
$\epsilon_j$ are i.i.d. with mean $0$ and $\E (|\epsilon_j|^p) <
\infty$ and $a_j$ are real coefficients with $\sum_{j=0}^\infty
a_j^2 < \infty$. Then the physical dependence measure $\delta_p(i)
= |a_i| \|\epsilon_0 - \epsilon_0'\|_p$. If $a_i = i^{-\beta}
\ell(i)$, where $1/2 < \beta < 1$ and $\ell$ is a slowly varying
function, then $(X_i)$ is a long memory process. Smaller $\beta$
indicates stronger dependence. Condition (iii) holds for all
$\beta \in (1/2, 1)$. Moreover, if $a_i = i^{-1/2}
(\log(i))^{-2}$, $i \ge 2$, which corresponds to the extremal case
with very strong dependence $\beta = 1/2$, we also have $\Psi_p(k)
= O( (\log k)^{-3/2} ) = o(1/\log k)$. So our dependence conditions 
are actually quite mild.

If $(X_i)$ is a linear process which is not identically zero, then
the following regularity conditions are automatically satisfied:
$\Psi_4>0$, $\Var(X_iX_j)>0$ for all $i,j\in\Z$, and $|\Cor(X_i
X_j, X_k X_l)|<1$ for all $i,j,k,l$ such that they are not all the
same.
\end{remark}

\subsection{Non-stationary Linear Processes}
Assume that each row of $(X_{n,k,i})$ is distributed as $(X_{n,i})_{1\leq i
  \leq m}$, which is of the form
\begin{align*}
  X_{n,i}=\sum_{t\in\Z} f_{n,i,t}\epsilon_{i-t},
\end{align*}
where $\epsilon_i,\,i\in\Z$ are i.i.d. random variables with mean
zero, variance one and finite fourth moment, and the sequence
$(f_{n,i,t})$ satisfies $\sum_{t\in \Z} f_{n,i,t}^2=1$. Denote by
$\kappa_4$ the fourth cumulant of $\epsilon_0$. For $1\leq i,j,k,l
\leq m$, we have
\begin{align*}
  \sigma_{n,i,j} & = \sum_{t\in\Z} f_{n,i,i-t}f_{n,j,j-t}, \\
  \Cov(X_{n,i}X_{n,j},X_{n,k}X_{n,l})
  & = \mathrm{Cum}(X_{n,i},X_{n,j},X_{n,k},X_{n,l})
  + \sigma_{n,i,k}\sigma_{n,j,l}+\sigma_{n,i,l}\sigma_{n,j,k},
\end{align*}
where $\mathrm{Cum}(X_{n,i},X_{n,j},X_{n,k},X_{n,l})$ is the fourth
order joint cumulant of the random vector
$(X_{n,i},X_{n,j},X_{n,k},X_{n,l})^\top$, which can be expressed as
\begin{align*}
  \mathrm{Cum}(X_{n,i},X_{n,j},X_{n,k},X_{n,l})
  = \sum_{t\in\Z} f_{n,i,i-t}f_{n,j,j-t}f_{n,k,k-t}f_{n,l,l-t}\kappa_4,
\end{align*}
by the multilinearity of cumulants. In particular, we have
\begin{align*}
  \Var(X_iX_j) = 1 + \sigma_{n,i,j}^2
  + \kappa_4\cdot\sum_{t \in \Z} f_{n,i,t}^2f_{n,j,t}^2.
\end{align*}
Since $\kappa_4 = \Var(\epsilon_0^2) - 2 \left(\E
\epsilon_0^2\right)^2 \geq -2$, the condition
\begin{align}
  \label{eq:cumulant}
  \kappa_4>-2
\end{align}
guarantees ($\mathrm{A1}$) in view of
\begin{align*}
  \Var(X_iX_j) \geq (1 + \sigma_{n,i,j}^2)(1+\min\{\kappa/2,0\})
  \geq \min\{1,1+\kappa/2\}>0.
\end{align*}

To ensure the validity of ($\mathrm{A2}$), it is natural to assume
that no pairs $X_{n,i}$ and $X_{n,j}$ are strongly correlated, {\it
  i.e.}
\begin{align}
  \label{eq:corr}
  \limsup_{n\rightarrow\infty}\sup_{1\leq i<j\leq m}\left|\sum_{t\in\Z}f_{n,i,i-t}f_{n,j,j-t}\right|<1.
\end{align}
We need the following lemma, whose proof is elementary and will be
given in the supplementary file.
\begin{lemma}
  \label{thm:normal}
  The condition (\ref{eq:corr}) suffices for ($\mathrm{A2}$) if
  $\epsilon_i$'s are i.i.d. $N(0,1)$.
\end{lemma}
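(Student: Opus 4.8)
The plan is to use the Gaussianity to turn each correlation into an explicit algebraic expression and then bound it away from $1$ uniformly. When the $\epsilon_i$ are i.i.d.\ $N(0,1)$ the vector $(X_{n,i})$ is jointly Gaussian, so $\kappa_4=0$ and the two displayed formulas above collapse to $\Cov(X_{n,i}X_{n,j},X_{n,k}X_{n,l})=\sigma_{n,i,k}\sigma_{n,j,l}+\sigma_{n,i,l}\sigma_{n,j,k}$ and $\Var(X_{n,i}X_{n,j})=1+\sigma_{n,i,j}^2$. Hence, for $\alpha=(i,j)$ and $\beta=(k,l)$,
\[
  \Cor(X_{n,\alpha},X_{n,\beta})=\frac{\sigma_{n,i,k}\sigma_{n,j,l}+\sigma_{n,i,l}\sigma_{n,j,k}}{\sqrt{(1+\sigma_{n,i,j}^2)(1+\sigma_{n,k,l}^2)}}.
\]
By (\ref{eq:corr}) there exist $\rho_0<1$ and $N$ with $|\sigma_{n,a,b}|\le\rho_0$ for all $a\neq b$ and all $n\ge N$, so it suffices to show this ratio is bounded away from $1$ in modulus, uniformly over all configurations whose off-diagonal correlations are at most $\rho_0$.

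First I would observe that only the four coefficient vectors $g_i,g_j,g_k,g_l$ enter, where $g_a=(f_{n,a,a-s})_s$ so that $\langle g_a,g_b\rangle=\sigma_{n,a,b}$ and $\|g_a\|=1$; one may therefore work in their span, of dimension at most four. Writing $X_{n,i}X_{n,j}=W^\top A W$ with $A=\tfrac12(g_ig_j^\top+g_jg_i^\top)$ and $B$ defined analogously from $(k,l)$, a direct computation gives $\Cov=2\tr(AB)$ and $\Var=2\tr(A^2)$, so the correlation equals exactly the cosine $\tr(AB)/(\|A\|\,\|B\|)$ in the Hilbert–Schmidt inner product. In particular its modulus is at most $1$, with equality iff $A$ and $B$ are (nonzero) real multiples of one another.

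Next I would split on the overlap of $\{i,j\}$ and $\{k,l\}$. Since $\alpha\neq\beta$, either all four indices are distinct or exactly one coincides. In the one-coincidence case, say $i=k$, the ratio reduces to $(b+pq)/\sqrt{(1+p^2)(1+q^2)}$ with $p=\sigma_{n,i,j}$, $q=\sigma_{n,i,l}$, $b=\sigma_{n,j,l}$; writing $(1+p^2)(1+q^2)-(|b|+|pq|)^2=(1-\rho_0^2)+\big[(|p|-|q|)^2+2|pq|(1-\rho_0)\big]\ge 1-\rho_0^2$ gives the explicit bound $\Cor^2\le 1-(1-\rho_0^2)/(1+\rho_0^2)^2=:\eta_B^2<1$, and the other one-coincidence patterns are identical by symmetry. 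In the all-distinct case all six inner products are $\le\rho_0<1$, so $A$ and $B$ are genuine rank-two forms, each with one positive and one negative eigenvalue, along the directions $g_i\pm g_j$ and $g_k\pm g_l$. Equality $|\Cor|=1$ would force $A$ and $B$ proportional, hence these eigendirections parallel in pairs; matching norms then forces one of the four vectors to equal $\pm$ another, i.e.\ $|\sigma_{n,a,b}|=1$ for a pair of distinct indices, contradicting $\rho_0<1$. Thus the ratio is strictly below $1$ at every point of the compact, $n$-independent set $\{v\in[-\rho_0,\rho_0]^6:\text{the associated $4\times4$ matrix is positive semidefinite}\}$, and being continuous there it attains a maximum $\eta_A<1$. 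Combining the cases yields $\gamma_n\le\max(\eta_A,\eta_B)<1$ for all $n\ge N$, which is (A2).

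The main obstacle is the all-distinct case. Unlike the one-coincidence case it has no one-line explicit bound, since the naive estimate $|\sigma_{n,i,k}\sigma_{n,j,l}+\sigma_{n,i,l}\sigma_{n,j,k}|\le 2\rho_0^2$ is useless once $\rho_0\ge 1/\sqrt2$. The crux is therefore the non-degeneracy argument certifying that the correlation never attains $1$ on the feasible set, together with the compactness step that upgrades this pointwise strict inequality into a uniform gap; some care is needed to check that positive semidefiniteness genuinely excludes the proportionality $A=cB$ for every admissible sign of $c$ and every pairing of the eigendirections.
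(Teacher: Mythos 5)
Your proof is correct, and I could verify every step: the Isserlis reduction $\Cor(X_{n,\alpha},X_{n,\beta})=(\sigma_{n,i,k}\sigma_{n,j,l}+\sigma_{n,i,l}\sigma_{n,j,k})/\sqrt{(1+\sigma_{n,i,j}^2)(1+\sigma_{n,k,l}^2)}$, the explicit bound $\Cor^2\le 1-(1-\rho_0^2)/(1+\rho_0^2)^2$ in the one-coincidence case (note the first ``$=$'' in that display should be ``$\ge$'', since it already uses $|b|\le\rho_0$; this does not affect the conclusion), and the four-distinct-indices case, where the identification of the correlation with the Hilbert--Schmidt cosine of $A=\tfrac12(g_ig_j^\top+g_jg_i^\top)$ and $B$, the spectral decomposition along $g_i\pm g_j$ with eigenvalues $\pm(1\pm\sigma_{n,i,j})/2$, and the matching of eigenvalues (which forces $c=\pm1$) do rule out $A=cB$ unless some $|\sigma_{n,a,b}|=1$ for distinct $a,b$; compactness of the $n$-independent feasible set then upgrades this to a uniform gap. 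The paper relegates its own proof to a supplementary file, but the surrounding text shows its route: it works with the coefficient representation, writing $\Var(X_{n,i}X_{n,j}-\rho X_{n,k}X_{n,l})$ as the explicit sum of squares (\ref{eq:var}) in the $f_{n,i,t}$ and bounding $\inf_\rho$ of it below, which is equivalent to (A2) via $\inf_\rho\Var(U-\rho V)=\Var(U)\bigl(1-\Cor(U,V)^2\bigr)$. The two arguments both hinge on Gaussianity but are genuinely different in execution: yours is coordinate-free, needs only the six pairwise covariances, and gives a clean structural reason for strictness (proportionality of the two rank-two forms forces two of the four unit vectors to coincide up to sign), at the cost that the compactness step produces no explicit constant in the non-overlapping case; the paper's sum-of-squares identity is more computational but yields quantitative lower bounds and, as the text following the lemma shows, extends immediately to non-Gaussian innovations with $\kappa_4>-2$, which your Wick-calculus argument does not.
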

As an immediate consequence, when $\epsilon_i$'s are i.i.d. $N(0,1)$,
we have
\begin{align*}
  \ell:=\limsup_{n\rightarrow\infty} \inf_{\ast}\inf_{\rho\in\R}\Var\left(X_{n,i}X_{n,j}-\rho X_{n,k}X_{n,l}\right) > 0,
\end{align*}
where $\inf_{\ast}$ is taken over all $1\leq i,j,k,l \leq m$ such that
$i<j$, $k<l$ and $(i,j)\neq(k,l)$. Observe that when $\epsilon_i$'s
are i.i.d. $N(0,1)$,
\begin{align}
  \label{eq:var}
  \Var\left(X_{n,i}X_{n,j}-\rho X_{n,k}X_{n,l}\right) & =
  2\cdot\sum_{t\in\Z}(f_{n,i,i-t}f_{n,j,j-t}-\rho f_{n,k,k-t}f_{n,l,l-t})^2\\ \nonumber
  & + \sum_{s<t}
    \left(f_{n,i,i-t}f_{n,j,j-s}+f_{n,i,i-s}f_{n,j,j-t} \right.\\ \nonumber
    & \quad\;
    \left. -\rho f_{n,k,k-t}f_{n,l,l-s}-\rho f_{n,k,k-s}f_{n,l,l-t}\right)^2;
\end{align}
and when $\epsilon_i$'s are arbitrary variables, the variance is given
by the same formula with the number 2 in (\ref{eq:var}) being replaced
by $2+\kappa_4$. Therefore, if (\ref{eq:cumulant}) holds, then
\begin{align*}
  \limsup_{n\rightarrow\infty} \inf_{\ast}\inf_{\rho\in\R}\Var\left(X_{n,i}X_{n,j}-\rho X_{n,k}X_{n,l}\right)
  \geq \min\{1,1+\kappa_4/2\}\cdot\ell>0,
\end{align*}
which implies $(\mathrm{A2})$ holds. To summarize, we have shown that
(\ref{eq:cumulant}) and (\ref{eq:corr}) suffice for $(\mathrm{A2})$.

Now we turn to Conditions ($\mathrm{A3}$) and ($\mathrm{A3'}$).
Set
\begin{align*}
  h_n(k)=\sup_{1\leq i \leq m}\left(\sum_{|t|=\floor{k/2}}^\infty f_{n,i,t}^2\right)^{1/2},
\end{align*}
where $\floor{x}=\max\{y\in\Z\,:\,y \leq x\}$ for any $x\in\E$, then we
have
\begin{align*}
  |\sigma_{n,i,j}| \leq 2h_n(0)h_n(|i-j|) = 2h_n(|i-j|).
\end{align*}
Fixing a subset $\{i,j\}$, for any integer $b>0$, there are at most
$8b^2$ subsets $\{k,l\}$ such that $\{k,l\}\subset B(i;b)\cup B(j;b)$,
where $B(x;r)$ is the open ball $\{y:|x-y|<r\}$. For all other subsets
$\{k,l\}$, we have
\begin{align*}
  |\Cov(X_{n,i}X_{n,j},X_{n,k}X_{n,l})| \leq (4+2\kappa_4)h_n(b),
\end{align*}
and hence ($\mathrm{A3}$) holds if we assume $h_n(k_n)\log k_n = o(1)$
for any positive sequence $(k_n)$ such that
$k_n\rightarrow\infty$. ($\mathrm{A3'}$) holds if we assume
\begin{align*}
  \sum_{k=1}^m [h_n(k)]^2 = O\left(m^{1-\delta}\right).
\end{align*}
for some $\delta>0$, because
\begin{align*}
  \left|\Cov(X_{n,i}X_{n,j},X_{n,k}X_{n,l})\right| \leq 2\kappa_4h_n(|i-j|) + 2h_n(|i-k|) + 2h_n(|i-l|).
\end{align*}

\section{Testing for covariance structures}
\label{sec:cov_structure} The asymptotic distribution given in
Theorem~\ref{thm:max_cov} has several statistical applications. One of
them is in high dimensional covariance matrix regularization, because
Theorem~\ref{thm:max_cov} implies a uniform convergence rate for all
sample covariances. Recently, \cite{cai:2011b} explored this
direction, and proposed a thresholding procedure for sparse covariance
matrix estimation, which is adaptive to the variability of each
individual entry. Their method is superior to the uniform thresholding
approach studied by \cite{bickel:2008b}.

Testing structures of covariance matrices is also a very important
statistical problem. As mentioned in the introduction, when the
data dimension is high, conventional tests often cannot be
implemented or do not work well. Let $\Sigma_n$ and $R_n$ be the
covariance matrix and correlation matrix of the random vector
$(X_{n,1,i})_{1\leq i \leq m}$ respectively. Two types of tests
have been studied under the large $n$, large $m$ paradigm.
\cite{chen:2010}, \cite{bai:2009}, \cite{ledoit:2003} and
\cite{johnstone:2001} considered the test
\begin{equation}
  \label{eq:3.identity}
  H_0:\;\Sigma_n=I_m;
\end{equation}
and \cite{liu:2008}, \cite{schott:2005}, \cite{srivastava:2005} and
\cite{jiang:2004} studied the problem of testing for complete
independence
\begin{equation}
  \label{eq:3.sphericity}
  H_0:\;R_n=I_m.
\end{equation}
Their testing procedures are all based on the critical assumption that
the entries of the data matrix $\h{X}_n$ are i.i.d., while the
hypotheses themselves only require the entries of $(X_{n,1,i})_{1\leq
  i \leq m}$ to be uncorrelated. Evidently, we can use $M_n$ in
(\ref{eq:max_cov}) to test (\ref{eq:3.sphericity}), and we only
require the uncorrelatedness for the validity of the limiting
distribution established in Theorem~\ref{thm:max_cov}, as long as the
mild conditions of the theorem are satisfied. On the other hand, we
can also take the sample variances into consideration, and use the
following test statistic
\begin{equation*}
  M_n' = \max_{1 \leq i\leq j \leq m} \frac{|\hat\sigma_{n,i,j}-\sigma_{n,i,j}|}{\sqrt{\hat\tau_{n,i,j}}}.
\end{equation*}
to test the identity hypothesis (\ref{eq:3.identity}), where
$\sigma_{n,i,j}=I\{i=j\}$. It is not difficult to verify that $M_n'$
has the same asymptotic distribution as $M_n$ under the same
conditions with the only difference being that we now have to take
sample variances into account as well, namely, the index set
$\mathcal{I}_n$ in Section~\ref{sec:main} is redefined as
$\mathcal{I}_n = \{(i,j):\,1\leq i\leq j\leq m\}$. Clearly, we can
also use $M_n'$ to test $H_0:\;\Sigma_n=\Sigma^0$ for some known
covariance matrix $\Sigma^0$.

By checking the proof of Theorem~\ref{thm:max_cov}, it can be seen
that if instead of taking the maximum over the set
$\mathcal{I}_n=\{(i,j):\,1\leq i<j \leq m\}$, we only take the maximum
over some subset $A_n\subset\mathcal{I}_n$ whose cardinality $|A_n|$
converges to infinity, then the maximum also has the Gumbel type
convergence with normalization constants which are functions of the
cardinality of the set $A_n$. Based on this observation, we are able
to consider three more testing problems.

\subsection{Test for stationarity}

Suppose we want to test whether the population is a stationary time
series. Under the null hypothesis, each row of the data matrix
$\h{X}_n$ is distributed as a stationary process $(X_i)_{1\leq i \leq
  m}$. Let $\gamma_l=\Cov(X_0,X_l)$ be the autocovariance at lag
$l$. In principle, we can use the following test statistic
\begin{equation*}
  \tilde T_n = \max_{1 \leq i\leq j \leq m} \frac{|\hat\sigma_{n,i,j}-\gamma_{i-j}|}{\sqrt{\hat\tau_{n,i,j}}}.
\end{equation*}
The problem is that $\gamma_l$ are unknown. Fortunately, they can not
only be estimated, but also be estimated with higher accuracy
\begin{equation*}
  \hat{\gamma}_{n,l} = \frac{1}{nm}\sum_{k=1}^n\sum_{i=|l|+1}^n (X_{n,k,i-|l|}-\hat\mu_n)(X_{n,k,i}-\hat\mu_n),
\end{equation*}
where $\hat\mu_n = (1/nm) \sum_{k=1}^n\sum_{i=1}^m X_{n,k,i}$, and we
are lead to the test statistic
\begin{equation*}
  T_n = \max_{1 \leq i\leq j \leq m} \frac{|\hat\sigma_{n,i,j}-\hat\gamma_{i-j}|}{\sqrt{\hat\tau_{n,i,j}}}.
\end{equation*}
Using similar arguments of Theorem~2 of \cite{wu:2011a}, under
suitable conditions, we have
\begin{equation*}
  \max_{0 \leq l \leq m-1} |\hat\gamma_{n,l}-\gamma_l| = O_P(\sqrt{\log m/nm}).
\end{equation*}
Therefore, the limiting distribution for $M_n$ in
Theorem~\ref{thm:max_cov} also holds for $T_n$.

\subsection{Test for bandedness}

In time series and longitudinal data analysis, it can be of interest
to test whether $\Sigma_m$ has the banded structure. The hypothesis to
be tested is
\begin{equation}
  \label{eq:3.bandedness}
  H_0:\; \sigma_{n,i,j}=0 \hbox{ if } |i-j|>B,
\end{equation}
where $B=B_n$ may depend on $n$. \cite{cai:2011a} studied this problem
under the assumption that each row of the data matrix $\h{X}_n$ is a
Gaussian random vector. They proposed to use the maximum sample
correlation outside the band
\begin{equation*}
  \tilde T_n = \max_{|i-j|>B} \frac{\hat\sigma_{n,i,j}}{\sqrt{\hat\sigma_{n,i,i}\hat\sigma_{n,j,j}}}
\end{equation*}
as the test statistic, and proved that $T_n$ also has the Gumbel type
convergence provided that $B_n=o(m)$ and several other technical
conditions hold.

Apparently, our Theorem~\ref{thm:max_cov} can be employed to test
(\ref{eq:3.bandedness}). If all the conditions of the theorem are
satisfied, the test statistic
\begin{equation*}
  T_n = \max_{|i-j|>B_n} \frac{|\hat\sigma_{n,i,j}|}{\sqrt{\hat\tau_{n,i,j}}}.
\end{equation*}
has the same asymptotic distribution as $M_n$ as long as
$B_n=o(m)$. Our theory does not need the normality assumption.

\subsection{Assess the tapering procedure}

Banding and tapering are commonly used regularization procedures in
high dimensional covariance matrix estimation. Convergence rates were
first obtained by \cite{bickel:2008a}, and later on improved by
\cite{cai:2010}. Let us introduce a weaker version of the latter
result. Suppose each row of $\h{X}_n$ is distributed as the random
vector $X=(X_i)_{1\leq i \leq m}$ with mean $\mu$ and covariance
matrix $\Sigma=(\sigma_{ij})$. Let $K_0,K$ and $t$ be positive
constants, and $\mathscr{C}_{\eta}(K_0, K, t)$ be the class of
$m$-dimensional distributions which satisfy the following conditions
\begin{align}
  & \max_{|i-j|=k} |\sigma_{ij}| \leq Kk^{-(1+\eta)} \quad\hbox{for all } k; \label{eq:3.decay} \\
  & \lambda_{\max}(\Sigma) \leq K_0; \cr
  & P\left[|v^\top(X-\mu)|>x\right] \leq e^{-tx^2/2} \quad \hbox{for all $x>0$ and $\|v\|=1$}; \nonumber
\end{align}
where $\lambda_{\max}(\Sigma)$ is the largest eigenvalue of
$\Sigma$. For a given even integer $1\leq B \leq m$, define the
tapered estimate of the covariance matrix $\Sigma$
\begin{align*}
  \hat\Sigma_{n,B_n} = \left(w_{ij}\hat\sigma_{n,i,j}\right),
\end{align*}
where the weights correspond to a flat top kernel and are given by
\begin{align*}
  w_{ij} = \left\{
    \begin{array}{ll}
      1, & \hbox{when } |i-j| \leq B_n/2, \\
      2-2|i-j|/B_n, & \hbox{when } B_n/2<|i-j|\leq B_n, \\
      0, & \hbox{otherwise}.
    \end{array}\right.
\end{align*}
\begin{theorem}[\citealp{cai:2010}]
  \label{thm:cai}
  If $m \geq n^{1/(2\eta+1)}$, $\log m = o(n)$ and
  $B_n=n^{1/(2\eta+1)}$, then there exists a constant $C>0$ such that
  \begin{align*}
    \sup_{\mathscr{C}_\eta} \E \left[\spec{\hat{\Sigma}_{n,B_n}-\Sigma}\right]^2
    \leq Cn^{-2\eta/(2\eta+1)} + C\frac{\log m}{n}.
  \end{align*}
\end{theorem}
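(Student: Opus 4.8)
The plan is to control the squared spectral norm through the bias–variance decomposition
\[
  \hat\Sigma_{n,B_n} - \Sigma
  = \bigl(\E\hat\Sigma_{n,B_n} - \Sigma\bigr)
  + \bigl(\hat\Sigma_{n,B_n} - \E\hat\Sigma_{n,B_n}\bigr),
\]
so that, by the triangle inequality for the spectral norm and $(a+b)^2\le 2a^2+2b^2$,
\[
  \E\bigl[\spec{\hat\Sigma_{n,B_n}-\Sigma}\bigr]^2
  \le 2\,\spec{\E\hat\Sigma_{n,B_n}-\Sigma}^2
  + 2\,\E\bigl[\spec{\hat\Sigma_{n,B_n}-\E\hat\Sigma_{n,B_n}}\bigr]^2 .
\]
The first (bias) term is deterministic and is handled by the polynomial decay (\ref{eq:3.decay}); the second (variance) term is the stochastic heart of the argument. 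The choice $B_n=n^{1/(2\eta+1)}$ is exactly what balances the two contributions, each of order $n^{-2\eta/(2\eta+1)}$.

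For the bias, I note that $\E\hat\sigma_{n,i,j}=\sigma_{n,i,j}$ up to a negligible $O(1/n)$ correction from estimating the mean (whose tapered contribution has spectral norm $O(K_0/n)$). The bias matrix therefore has entries $(w_{ij}-1)\sigma_{n,i,j}$, which vanish for $|i-j|\le B_n/2$ and are bounded by $|\sigma_{n,i,j}|$ otherwise. Being symmetric, its spectral norm is at most its maximal absolute row sum, and (\ref{eq:3.decay}) gives
\[
  \spec{\E\hat\Sigma_{n,B_n}-\Sigma}
  \le \max_{i}\sum_{|i-j|>B_n/2} |\sigma_{n,i,j}|
  \le 2K\sum_{k>B_n/2} k^{-(1+\eta)}
  \le C\,B_n^{-\eta}
  = C\,n^{-\eta/(2\eta+1)},
\]
so the squared bias is of order $n^{-2\eta/(2\eta+1)}$, uniformly over $\mathscr{C}_\eta$.

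For the variance term the key is a structural decomposition of the banded matrix. Writing $k=B_n/2$ and partitioning $\{1,\dots,m\}$ into overlapping windows of length $2k$, the trapezoidal weights can be realized by summing local sample-covariance blocks on these windows: each entry with $|i-j|\le k$ is covered with total weight one, while entries with $k<|i-j|\le 2k$ receive the linearly decreasing weight. Since each window overlaps only its neighbours, the windows split into a bounded number of families of pairwise-disjoint windows, and the sum over each family is block diagonal after a permutation; hence
\[
  \spec{\hat\Sigma_{n,B_n}-\E\hat\Sigma_{n,B_n}}
  \le C \max_{l} \spec{\hat\Sigma_{I_l}-\E\hat\Sigma_{I_l}},
\]
where $\hat\Sigma_{I_l}$ is the sample covariance on the $l$-th window, a matrix of dimension $2k=B_n$, and there are $N\asymp m/B_n$ windows. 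The spectral norm of a banded deviation is thereby reduced to a maximum of $B_n$-dimensional block norms.

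The hard part is the final estimate $\E\max_l\spec{\hat\Sigma_{I_l}-\E\hat\Sigma_{I_l}}^2 \le C(B_n+\log m)/n$, uniformly over $\mathscr{C}_\eta$. For a single block the sub-Gaussian tail condition together with $\lambda_{\max}(\Sigma)\le K_0$ yields a deviation bound of the form $P\bigl(\spec{\hat\Sigma_{I_l}-\E\hat\Sigma_{I_l}}>x\bigr)\le \exp(C B_n - cnx^2)$ for $x$ in the relevant range, via an $\varepsilon$-net over the $B_n$-sphere ($e^{CB_n}$ points) plus a Bernstein-type estimate for each quadratic form. A union bound over the $N\asymp m/B_n$ windows and integration of the tail give
\[
  \E\max_l\spec{\hat\Sigma_{I_l}-\E\hat\Sigma_{I_l}}^2
  \le C\,\frac{B_n+\log N}{n}
  \le C\,\frac{B_n}{n} + C\,\frac{\log m}{n}.
\]
Since $B_n/n=n^{-2\eta/(2\eta+1)}$, combining the bias and variance bounds proves the theorem, with all constants depending only on $(\eta,K_0,K,t)$. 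I expect the main obstacle to be making the block decomposition and the uniform operator-norm concentration of the $B_n$-dimensional blocks rigorous — in particular extracting the separate $B_n/n$ (dimension) and $\log m/n$ (number of blocks) contributions from the sub-Gaussian tail with constants independent of the distribution in $\mathscr{C}_\eta$.
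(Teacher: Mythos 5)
This statement is not proved in the paper at all: Theorem~\ref{thm:cai} is imported verbatim (in weakened form) from \cite{cai:2010}, so there is no internal proof to compare against. Your sketch is, in substance, the argument of that reference — bias controlled by maximal row sums of the off-band entries under the decay condition (\ref{eq:3.decay}), giving $O(B_n^{-2\eta})$, and variance controlled by decomposing the tapered deviation into $O(m/B_n)$ blocks of dimension $O(B_n)$ and applying an $\varepsilon$-net plus sub-Gaussian quadratic-form concentration with a union bound, giving $O((B_n+\log m)/n)$ — and the two obstacles you flag (making the block decomposition exact for the trapezoidal weights, and integrating the tail of the squared maximum outside the range where the exponential bound is valid, which \cite{cai:2010} handles via Cauchy--Schwarz against a crude polynomial moment bound) are precisely the technical points that the original proof spends its effort on. So the proposal is a correct outline of essentially the standard proof, rather than a new route.
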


We see that it is the parameter $\eta$ that decides the convergence
rate under the operator norm. After such a tapering procedure has been
applied, it is important to ask whether it is appropriate, and in
particular, whether (\ref{eq:3.decay}) is satisfied. We propose to use
\begin{align*}
  T_n = \max_{|i-j|>B_n} \frac{|\hat\sigma_{n,i,j}|}{\sqrt{\hat\tau_{n,i,j}}}
\end{align*}
as the test statistic. According to the observation made at the
beginning of Section~\ref{sec:cov_structure}, if the conditions of
Theorem~\ref{thm:max_cov} are satisfied, then
\begin{align*}
  T_n'= \max_{|i-j|>B_n} \frac{|\hat\sigma_{n,i,j}-\sigma_{i,j}|}{\sqrt{\hat\tau_{n,i,j}}}
\end{align*}
has the same limiting law as $M_n$. On the other hand,
(\ref{eq:3.decay}) implies that
\begin{align*}
  \max_{|i-j|>B_n} |\sigma_{i,j}| = O\left(n^{-(1+\eta)/(2\eta+1)}\right),
\end{align*}
so $T_n$ has the same limiting distribution as $T_n'$ if we further
assume $\log m = o\left(n^{2/(4\eta+2)}\right)$.

\section{Proof}
\label{sec:proof}


The proofs of Theorem~\ref{thm:max_cov} under ($\mathrm{A4}$) and
($\mathrm{A4'}$) are very similar, and they share a common Poisson
approximation step, which we will formulate in
Section~\ref{sec:max_mean} under a more general context, where the
limiting distribution of the maximum of sample means is
obtained. Since the proof under ($\mathrm{A4'}$) is more involved, we
provide the detailed proof under this assumption in
Section~\ref{sec:a4'}, and point out in Section~\ref{sec:a4} how it
can be adapted to give a proof under ($\mathrm{A4}$).

\subsection{Maximum of Sample Means: An Intermediate Step}
\label{sec:max_mean}

In this section we provide a general result on the maximum of sample
means. Let $\h{Y}_n=(Y_{n,k,i})_{1\leq k \leq n,\,i\in\mathcal{I}_n}$
be a data matrix whose $n$ rows are independent and identically
distributed, and whose entries have mean zero and variance one, where
$\mathcal{I}_n$ is an index set with cardinality
$|\mathcal{I}_n|=s_n$. For each $i \in \mathcal{I}_n$, let $y_i$ be
the $i$-th column of $\h{Y}_n$, $\bar y_i = (1/n)\sum_{k=1}^n
Y_{n,k,i}$.
Define
\begin{equation}
  \label{eq:max_mean}
  W_n = \max_{i \in \mathcal{I}_n} {|\bar y_i|}.
\end{equation}
Let $\Sigma_n$ be the covariance matrix of the $s_n$-dimensional
random vector $(Y_{n,1,i})_{i \in \mathcal{I}_n}$. 

\begin{lemma}
  \label{thm:max_mean}
  Assume $\Sigma_n$ satisfies either (B1) or (B2) of
  Section~\ref{sec:normal_comparison} and $\log s_n
  =o(n^{1/3})$. Suppose there is a constant $C>0$ such that $Y_{n,k,i}
  \in \mathscr{B}(1,Ct_n)$ for each $1\leq k \leq
  n,\;i\in\mathcal{I}_n$, with
  \begin{equation*}
    t_n = \frac{\sqrt{n}\delta_n}{(\log s_n)^{3/2}}, 
  \end{equation*}
  where $(\delta_n)$ is a sequence of positive numbers such that
  $\delta_n=o(1)$ and $(\log s_n)^3/n=o(\delta_n)$, and the definition
  of the collection $\mathscr{B}(d,\tau)$ is given in
  (\ref{eq:bernstein}). Then
  \begin{equation}
    \label{eq:max_mean_convergence}
    \lim_{n\rightarrow\infty}P\left(nW_n^2 - 2\log s_n + \log (\log s_n) + \log \pi \leq z \right)
    =\exp\left(-e^{-z/2}\right).
  \end{equation}
\end{lemma}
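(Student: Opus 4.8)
The plan is to recast the event in \eqref{eq:max_mean_convergence} as a statement about a count of exceedances and then run a Poisson approximation, which is the device the whole proof of Theorem~\ref{thm:max_cov} rests on. Writing $u_n=u_n(z)$ for the positive root of $u_n^2 = 2\log s_n - \log(\log s_n) - \log\pi + z$, the event $\{nW_n^2 - 2\log s_n + \log(\log s_n) + \log\pi \le z\}$ coincides with $\{\max_{i\in\mathcal{I}_n}|\sqrt n\,\bar y_i| \le u_n\}$. I would then introduce the exceedance events $A_i = \{|\sqrt n\,\bar y_i| > u_n\}$ and the count $N = \sum_{i\in\mathcal{I}_n}\mathbf 1_{A_i}$, so that the left-hand side of \eqref{eq:max_mean_convergence} equals $P(N=0)$. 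The target law suggests that $N$ should be asymptotically Poisson with mean $\tau = e^{-z/2}$; the goal reduces to $P(N=0)\to e^{-\tau}$, which I would obtain by the method of moments, namely by showing that the factorial moments of $N$ converge to those of a Poisson($\tau$) variable. This splits into convergence of the mean $\lambda_n = \E N = \sum_i P(A_i)$ to $\tau$ and negligibility of the higher-order clustering terms.

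For the first moment, $\sqrt n\,\bar y_i$ is a normalized sum of the $n$ i.i.d.\ centered unit-variance entries in column $i$, each lying in the Bernstein class $\mathscr{B}(1,Ct_n)$ of \eqref{eq:bernstein}. Since $\log s_n = o(n^{1/3})$, the threshold satisfies $u_n \sim \sqrt{2\log s_n} = o(n^{1/6})$, placing $P(A_i)$ squarely in the moderate-deviation window. I would invoke a Cram\'er-type moderate-deviation estimate for sums of independent variables under the Bernstein condition to get $P(|\sqrt n\,\bar y_i| > u_n) = (1+o(1))\,2\,(1-\Phi(u_n))$ uniformly over $i\in\mathcal{I}_n$, where $\Phi$ is the standard normal distribution function. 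The scaling $t_n = \sqrt n\,\delta_n/(\log s_n)^{3/2}$, with the constraints $(\log s_n)^3/n = o(\delta_n)$ and $\delta_n = o(1)$, is exactly calibrated so that the Cram\'er correction, which is of order $u_n^3\,t_n/\sqrt n$ and hence of order $\delta_n=o(1)$, vanishes; this is precisely why the exponent $3/2$ appears in $t_n$, to cancel $u_n^3\sim(\log s_n)^{3/2}$. Using $2(1-\Phi(u_n)) \sim \sqrt{2/\pi}\,u_n^{-1}e^{-u_n^2/2}$ together with the definition of $u_n$ then yields $\lambda_n = s_n\cdot(1+o(1))\,2(1-\Phi(u_n)) \to e^{-z/2} = \tau$.

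It remains to upgrade this to the full Poisson limit, i.e.\ $P(N=0)-e^{-\lambda_n}\to 0$; the first nontrivial instance is showing that the pairwise sum $\sum_{i\ne j} P(A_i\cap A_j)$ stays asymptotically equal to $\lambda_n^2$. This is where the dependence enters and where conditions (B1) and (B2) of Section~\ref{sec:normal_comparison} are used. I would split the index pairs according to the size of the correlation $\rho_{ij}$ between $Y_{n,1,i}$ and $Y_{n,1,j}$ (an entry of the covariance matrix $\Sigma_n$): for weakly correlated pairs a bivariate moderate-deviation/normal-comparison bound gives $P(A_i\cap A_j) = (1+o(1))P(A_i)P(A_j)$, so these reproduce $\lambda_n^2$, while for the strongly correlated pairs one controls either their number (under (B1)) or their total squared mass $\sum_{i,j}\rho_{ij}^2$ (under (B2)) to show their contribution is $o(1)$. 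The decisive estimate is a Berman-type normal comparison inequality at the threshold $u_n\sim\sqrt{2\log s_n}$, bounding the excess joint tail by a quantity of order $|\rho_{ij}|\exp\!\big(-u_n^2/(1+|\rho_{ij}|)\big)$, which makes (B1)/(B2) exactly the summability requirements that kill the strongly correlated pairs. The same split, applied to $k$-fold tuples, gives convergence of all factorial moments.

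The main obstacle is precisely this last step: controlling the joint exceedance probabilities uniformly over the $\binom{s_n}{2}$ pairs (and, more generally, over $k$-tuples) inside the moderate-deviation window, where no exact Gaussian formula is available and where a handful of highly correlated pairs could otherwise inflate the second moment. Overcoming it requires combining a bivariate moderate-deviation approximation, to reduce $P(A_i\cap A_j)$ to a bivariate Gaussian tail, with a quantitative normal comparison lemma, and then verifying that the summability built into (B1) and (B2) suffices. Once the pairwise and higher-order terms are shown to equal $\lambda_n^2+o(1)$ and their analogues, the method of moments gives $N\Rightarrow\mathrm{Poisson}(\tau)$, whence $P(N=0)\to e^{-\tau} = \exp(-e^{-z/2})$, completing the proof.
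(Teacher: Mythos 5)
Your overall architecture coincides with the paper's: the proof is indeed run through Lemma~\ref{thm:poisson} (Poisson approximation by factorial moments), the factorial moments are computed by replacing the non-Gaussian exceedance probabilities with Gaussian ones for the vector $(Z_{n,i})$ with covariance $\Sigma_n$, and the Gaussian joint tails are then handled by the normal comparison principle (Lemma~\ref{thm:normal_comparison}), which is exactly where (B1)/(B2) enter via the Berman-type bound you describe. Your diagnosis of the calibration of $t_n$ --- that the exponent $3/2$ is there to cancel $u_n^3\sim(\log s_n)^{3/2}$ and leave an error of order $\delta_n=o(1)$ --- is also the right reading of the hypotheses.

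Where you diverge is in the tool used for the non-Gaussian-to-Gaussian reduction, and this is the one place where your plan is materially harder to execute than the paper's. You propose Cram\'er-type moderate deviations with \emph{multiplicative} error control: univariate for the first moment (fine, and standard under the Bernstein condition), but then a ``bivariate moderate-deviation approximation'' giving $P(A_i\cap A_j)=(1+o(1))P(A_i)P(A_j)$, and analogues for $d$-tuples. A ratio-type multivariate moderate deviation estimate for joint orthant/exceedance events, uniform over all $\binom{s_n}{d}$ tuples whose correlation structure is constrained only by (B1)/(B2), is not an off-the-shelf result under a Bernstein condition, and you do not indicate how you would prove it; as written this is the gap in the proposal. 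The paper avoids the issue entirely by applying Zaitsev's strong approximation (Lemma~\ref{thm:zaitsev}) to each $d$-dimensional vector $y_A=\sqrt n(\bar y_{i_1},\ldots,\bar y_{i_d})^\top$: this yields the \emph{additive} sandwich $P(|Z_A|_\bullet>z_n+\theta_n)-\varepsilon_n\le P(|y_A|_\bullet>z_n)\le P(|Z_A|_\bullet>z_n-\theta_n)+\varepsilon_n$ with $\theta_n=\delta_n^{1/2}/\sqrt{\log s_n}$ and $\varepsilon_n=C_d\exp\{-(\log s_n)\delta_n^{-1/2}\}$, so that $s_n^d\varepsilon_n\to 0$ and $(z_n\pm\theta_n)^2=z_n^2+o(1)$; one then only needs the Gaussian computation of Lemma~\ref{thm:normal_comparison} at the shifted threshold. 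I would recommend you replace the multivariate moderate-deviation step with this coupling-type inequality (or supply a proof of the multivariate estimate you are invoking); the rest of your argument then goes through as in the paper.
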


\begin{proof}
  For each $z\in\R$, let $z_n=a_{2s_n}z/2 + b_{2s_n}$. Let
  $(Z_{n,i})_{i\in\mathcal{I}_n}$ be a mean zero normal random vector
  with covariance matrix $\Sigma_n$.
  For any subset $A=\{i_1,i_2,\ldots,i_d\} \subset \mathcal{I}_n$, let
  $y_A=\sqrt{n}(\bar y_{i_1}, \bar y_{i_2}, \ldots, \bar
  y_{i_d})^\top$ and $Z_{A}=(Z_{i_1}, Z_{i_2}, \ldots, Z_{i_d})$. By
  Lemma~\ref{thm:zaitsev}, we have for $\theta_n =
  \delta_n^{1/2}/\sqrt{\log s_n}$ that
  \begin{align*}
    P\left(|y_{A}|_\bullet > z_n\right) & \leq P(|Z_A|_{\bullet}>z_n - \theta_n)
    + C_d\exp\left\{-\frac{\theta_n}{C_d \delta_n(\log s_n)^{-3/2}}\right\} \\
    & \leq P(|Z_A|_{\bullet}>z_n - \theta_n) + C_d\exp\left\{-(\log s_n)\delta_n^{-1/2}\right\}
  \end{align*}
  Therefore,
  \begin{align*}
    \sum_{A \subset \mathcal{I}_n, |A|=d} & P\left(|y_{A}|_\bullet > z_n\right) \\
    & \leq \sum_{A \subset \mathcal{I}_n, |A|=d} P(|Z_A|_{\bullet}>z_n - \theta_n)
    + C_ds_n^d \exp\left\{-(\log s_n)\delta_n^{-1/2}\right\}.
  \end{align*}
  Similarly, we have
  \begin{align*}
    \sum_{A \subset \mathcal{I}_n, |A|=d} & P\left(|y_{A}|_\bullet > z_n\right) \\
    & \geq \sum_{A \subset \mathcal{I}_n, |A|=d} P(|Z_A|_{\bullet}>z_n + \theta_n)
    - C_ds_n^d \exp\left\{-(\log s_n)\delta_n^{-1/2}\right\}.
  \end{align*}
  Since $(z_n\pm\theta_n)^2 = 2\log s_n - \log (\log s_n) - \log \pi +
  z + o(1)$, by Lemma~\ref{thm:normal_comparison}, we know
  \begin{align*}
    \lim_{n\rightarrow\infty} \sum_{A \subset \mathcal{I}_n, |A|=d} P(|Z_A|_{\bullet}>z_n \pm \theta_n)
    = \frac{e^{-dz/2}}{d\,!},
  \end{align*}
  and hence
  \begin{align*}
    \lim_{n\rightarrow\infty}\sum_{A \subset \mathcal{I}_n, |A|=d} P\left(|y_{A}|_\bullet > z_n\right)
    = \frac{e^{-dz/2}}{d\,!}.
  \end{align*}
  The proof is complete in view of Lemma~\ref{thm:poisson}.
\end{proof}

\subsection{Proof under ($\mathrm{A4'}$)}
\label{sec:a4'}

We divide the proof into three steps. The first one is a
truncation step, which will make the Gaussian approximation result
Lemma~\ref{thm:zaitsev} and the Bernstein inequality applicable,
so that we can prove Theorem~\ref{thm:max_cov} under the
assumption that all the involved mean and variance parameters are
known. In the next two steps we show that plugging in estimated
mean and variance parameters does not change the limiting
distribution.

\paragraph{\underline{\textnormal{\em Step 1: Truncation}}}

For notational simplicity we let $q=p/(4+2p)$. Define
\begin{align}
  \label{eq:3.truncation}
  \tilde X_{n,k,i} = X_{n,k,i}I\left\{|X_{n,k,i}|\leq n^{1/(4+2p)}\right\},
\end{align}
and define $\tilde M_n$ similarly as $M_n$ with $X_{n,k,i}$ being
replaced by its truncated version $\tilde X_{n,k,i}$. Since $\log m =
o (n^{q})$, we have
\begin{align*}
  P\left(\tilde M_n \neq M_n\right) & \leq \sum_{k=1}^n\sum_{i=1}^m
  P\left[|X_{n,k,i}| > n^{1/(4+2p)}\right] \\
  & \leq nm \mathcal{K}_n(t,p) \exp\left\{-t n^{p/(4+2p)}\right\} \\
  & = \mathcal{K}_n(t,p) \exp\left\{-tn^{q} + \log m + \log n\right\}
  = o(1).
\end{align*}
Therefore, in the rest of the proof, it suffices to consider $\tilde
X_{n,k,i}$. For notational simplicity, we still use $\tilde X_{n,k,i}$
to denote its centered version with mean zero.

Define $\tilde \sigma_{n,i,j}=\E \left(\tilde X_{n,1,i}\tilde
  X_{n,1,j}\right)$, and $\tilde \tau_{n,i,j} = \Var\left(\tilde
X_{n,1,i}\tilde X_{n,1,j}\right)$. Set
\begin{align*}
  M_{n,1} & = \max_{1\leq i<j \leq m} \frac{1}{\sqrt{\tilde \tau_{n,i,j}}}
  \left|\frac{1}{n} \sum_{k=1}^n \tilde X_{n,k,i}\tilde X_{n,k,j} -\tilde \sigma_{n,i,j} \right|; \\
  M_{n,2} & = \max_{1\leq i<j \leq m} \frac{1}{\sqrt{\tilde \tau_{n,i,j}}}
  \left|\frac{1}{n} \sum_{k=1}^n \tilde X_{n,k,i}\tilde X_{n,k,j} - \sigma_{n,i,j} \right|. \\
\end{align*}
Elementary calculations show that
\begin{align}
  \label{eq:3.2}
  \max_{1\leq i \leq j \leq m} |\tilde \sigma_{n,i,j} - \sigma_{n,i,j}| & \leq C \exp \left\{-tn^{q}/2
    \right\},\quad\hbox{and} \\ \label{eq:3.3}
  \max_{\alpha, \beta \in \mathcal{I}_n} \left|\Cov(\tilde X_{n,\alpha}, \tilde X_{n,\beta} ) -
  \Cov(X_{n,\alpha}, X_{n,\beta})\right| & \leq C \exp \left\{-tn^{q}/2
    \right\}.
\end{align}
By (\ref{eq:3.3}), we know the covariance matrix of $(\tilde
X_{n,\alpha})_{\alpha\in\mathcal{I}_n}$ satisfies either (B1) or (B2)
if $\Sigma_n$ satisfies (B1) or (B2) correspondingly. On the other
hand, we have by elementary calculation that there exists a constant
$C_{p}>0$ such that
\begin{align*}
  \limsup_{n\rightarrow\infty}\max_{\alpha \in \mathcal{I}_n} \E \exp\{C_p t  |\tilde X_{n,\alpha}|^{p/2}\} < \infty.
\end{align*}
It follows that when $0<p<2$, for each integer $r \geq 3$
\begin{align*}
  \E |\tilde X_{n,\alpha}|^{r} & \leq \E |\tilde X_{n,\alpha}|^{rp/2} \cdot \left(4 n^{2/(4+2p)}\right)^{r(1-p/2)} \\
  & \leq \left(4 n^{2/(4+2p)}\right)^{r(1-p/2)} r! (C_pt)^{-r} \E \exp\{C_p t |X_{n,\alpha}|^{p/2}\}.
\end{align*}
Therefore,
\begin{align*}
  \E_0\tilde X_{n,\alpha} \in \mathscr{B}\left[1,C\frac{\sqrt{n}}{n^{2p/(4+2p)}}\right].
\end{align*}
When $2 \leq p \leq 4$, it is easily seen that $\E_0\tilde X_{n,\alpha}
\in \mathscr{B}(1,C)$.  Since $\log m = o (n^q)$, we know all the
conditions of Lemma~\ref{thm:max_mean} are satisfied, and hence
\begin{equation}
  \label{eq:3.4}
  \lim_{n\rightarrow\infty}P\left(nM_{n,1}^2 - 4\log m + \log(\log m) + \log(8\pi) \leq y\right)
  = \exp\left(-e^{-y/2}\right).
\end{equation}
Combining (\ref{eq:3.2}) and (\ref{eq:3.3}), we know the preceding
equation (\ref{eq:3.4}) also holds with $M_{n,1}$ being replaced by
$M_{n,2}$.

\paragraph{\underline{\textnormal{\em Step 2: Effect of Estimated Means}}}
Set $\bar X_{n,i} = (1/n)\sum_{k=1}^n \tilde X_{n,k,i}$. Define
\begin{align*}
  M_{n,3} = \max_{1\leq i<j \leq m} \frac{1}{\sqrt{\tilde \tau_{n,i,j}}}
  \left|\frac{1}{n} \sum_{k=1}^n (\tilde X_{n,k,i}-\bar X_{n,i})(\tilde X_{n,k,j}-\bar X_{n,j}) - \sigma_{n,i,j} \right|.
\end{align*}
In this step we show that (\ref{eq:3.4}) also holds for
$M_{n,3}$. Observe that
\begin{align*}
  \left|M_{n,3} - M_{n,2}\right| \leq \max_{1\leq i<j \leq m} \frac{|\bar X_{n,i}\bar{X}_{n,j}|}{\sqrt{\tilde \tau_{n,i,j}}}
  \leq \max_{1 \leq i \leq m}|\bar X_{n,i}|^2 \cdot \left(\min_{1 \leq i<j \leq m} \tilde \tau_{n,i,j}\right)^{-1/2}.
\end{align*}
Since each $X_{n,k,i}$ is bounded by $2n^{1/(4+2p)}$, by Bernstein's
inequality we have for any constant $K>0$,
\begin{align*}
  \max_{1\leq i \leq m}P\left(|\bar X_{n,i}| > 2K\sqrt{\log m \over n}\right)
  & \leq C \exp\left\{ -\frac{2K^2 n \log m}{C n + 2K \sqrt{n\log m}
      \cdot 2n^{1/(4+2p)}} \right\} \\
  & \leq C m^{-K^2/C},
\end{align*}
and hence
\begin{align}
  \label{eq:3.1}
  \max_{1 \leq i \leq m}|\bar X_{n,i}| = O_P\left(\sqrt{\frac{\log m}{n}}\right),
\end{align}
which together with (\ref{eq:3.3}) implies that
\begin{align*}
  \left|M_{n,3} - M_{n,2}\right| = O_P\left(\frac{\log m}{n}\right) = o_P\left(\sqrt{\frac{1}{n \log m}}\right).
\end{align*}
Therefore, (\ref{eq:3.4}) also holds for $M_{n,3}$.

\paragraph{\underline{\textnormal{\em Step 3: Effect of Estimated Variances}}}
Denote by $\check{\sigma}_{n,i,j}$ the estimate of $\tilde\sigma_{n,i,j}$
\begin{align*}
  {\check\sigma_{n,i,j}} = \frac{1}{n} \sum_{k=1}^n (\tilde X_{n,k,i}-\bar X_{n,i})(\tilde X_{n,k,j}-\bar X_{n,j}).
\end{align*}
In the definition of $\tilde M_n$, $\tilde\tau_{n,i,j}$ is unknown,
and is estimated by
\begin{align*}
  {\check\tau_{n,i,j}} = \frac{1}{n}\sum_{k=1}^n
  \left[(\tilde X_{n,k,i} - \bar X_{n,i}) (\tilde X_{n,k,j} - \bar X_{n,j}) - {\check\sigma_{n,i,j}}\right]^2
\end{align*}
In this step we show that (\ref{eq:3.4}) holds for  $\tilde M_{n}$.
Since
\begin{align*}
  n\left|M_{n,3}^2-\tilde M_n^2\right| \leq nM_{n,3}^2 \cdot \max_{1\leq i<j\leq m}|1-\tilde\tau_{n,i,j}/\check\tau_{n,i,j}|,
\end{align*}
it suffices to show that
\begin{align}
  \label{eq:3.5}
  \max_{1\leq i<j\leq m} \left|\check\tau_{n,i,j}-\tilde\tau_{n,i,j}\right| = o_P(1/\log m).
\end{align}
Set
\begin{align*}
  \check\tau_{n,i,j,1} & = \frac{1}{n}\sum_{k=1}^n
  \left[(\tilde X_{n,k,i} - \bar X_{n,i}) (\tilde X_{n,k,j} - \bar X_{n,j}) - {\tilde\sigma_{n,i,j}}\right]^2 \\
    \check\tau_{n,i,j,2} & = \frac{1}{n}\sum_{k=1}^n
  \left(\tilde X_{n,k,i} \tilde X_{n,k,j} - {\tilde\sigma_{n,i,j}}\right)^2.
\end{align*}
Observe that
\begin{align*}
  \check\tau_{n,i,j,1} - \check\tau_{n,i,j} = (\check\sigma_{n,i,j} - \tilde\sigma_{n,i,j})^2
\end{align*}
which in together with (\ref{eq:3.4}) implies that
\begin{align}
  \label{eq:3.6}
  \max_{1\leq i<j\leq m} \left|\check\tau_{n,i,j,1}-\check\tau_{n,i,j}\right| = O_P \left(\log m/n\right).
\end{align}
Note that $\tilde X_{n,k,i,j}$ are uniformly bounded according to
the truncation (\ref{eq:3.truncation}), so
\begin{align*}
  \left(\tilde X_{n,k,i} \tilde X_{n,k,j} - {\tilde\sigma_{n,i,j}}\right)^2 \leq 64 n^{4/(4+2p)}.
\end{align*}
By Bernstein's inequality, we have
\begin{align*}
  \max_{1\leq i<j\leq m} P\left(|\check\tau_{n,i,j,2}-\tilde\tau_{n,i,j}| \geq  2n^{-q}\right)
  & \leq \exp\left\{-\frac{2 n^{2(1-q)}}{Cn + 2n^{1-q} \cdot 128n^{4/(4+2p)}/3}\right\} \\
  & \leq \exp\left(-n^q/100\right),
\end{align*}
and it follows that
\begin{align}
  \label{eq:3.7}
  \max_{1\leq i<j\leq m} \left|\check\tau_{n,i,j,2}-\tilde\tau_{n,i,j}\right| 
  = O_P(n^{-q}).
\end{align}
In view of (\ref{eq:3.6}), (\ref{eq:3.7}), and the assumption $\log
m=o(n^q)$, we know to show (\ref{eq:3.5}), it remains to prove
\begin{align}
  \label{eq:3.8}
  \max_{1\leq i<j\leq m} \left|\check\tau_{n,i,j,1}-\check\tau_{n,i,j,2}\right| = o_P(1/\log m).
\end{align}
Elementary calculations show that
\begin{align*}
  \max_{1\leq i<j\leq m} \left|\check\tau_{n,i,j,1}-\check\tau_{n,i,j,2}\right|
  \leq 4h_{n,1}^2h_{n,2} +  3h_{n,1}^4 + 4h_{n,4}^{1/2}h_{n,2}^{1/2}h_{n,1} 
  + 2h_{n,3}h_{n,1}^2,
\end{align*}
where
\begin{align*}
  h_{n,1} & =\max_{1\leq i \leq m} |\bar X_{n,i}| \\
  h_{n,2} & = \max_{1\leq i \leq m} \frac{1}{n}\sum_{k=1}^n \tilde X_{n,k,i}^2 \\
  h_{n,3} & = \max_{1\leq i\leq j \leq m}
  \left|\frac{1}{n}\sum_{k=1}^n \tilde X_{n,k,i} \tilde X_{n,k,j} - \tilde\sigma_{n,i,j}\right|\\
  h_{n,4} & = \check\tau_{n,i,j,2}.
\end{align*}
By (\ref{eq:3.1}), we know $h_{n,1}=O_P(\sqrt{\log m/n})$. By
(\ref{eq:3.7}) we have $h_{n,4}=O_P(1)$. Combining
(\ref{eq:3.truncation}) and the Bernstein's inequality, we can show
that
\begin{align*}
  h_{n,3}= O_P\left(\sqrt{{\log m}/{n}}\right).
\end{align*}
As an immediate consequence, we know $h_{n,2}=O_P(1)$. Therefore,
\begin{align*}
  \max_{1\leq i<j\leq m} \left|\check\tau_{n,i,j,1}-\check\tau_{n,i,j,2}\right| = O_P\left(\sqrt{{\log m}/{n}}\right),
\end{align*}
and (\ref{eq:3.8}) holds by using the assumption $\log m
=o(n^{q})=o(n^{1/3})$. The proof of Theorem~\ref{thm:max_cov} under
($\mathrm{A4'}$) is now complete.

\subsection{Proof under (A4)}
\label{sec:a4}

We follow the proof in Section~\ref{sec:a4'}, and point out necessary
modifications to make it work under (A4). If not specified, all the
notations have the same definitions as in Section~\ref{sec:a4'}. For
notational simplicity, we let $p=4(1+q)+\delta$.

\paragraph{\underline{\textnormal{\em Step 1: Truncation}}}

We truncate $X_{n,k,i}$ by
\begin{align*}
  \tilde X_{n,k,i} = X_{n,k,i}I\left\{|X_{n,k,i}|\leq n^{1/4}/\log n\right\},
\end{align*}
then
\begin{align*}
  P\left(\tilde M_n \neq M_n\right) \leq nm \mathcal{M}_n(p) n^{-p/4}(\log n)^p
  \leq C \mathcal{M}_n(p) n^{-\delta/4}(\log n)^p = o(1).
\end{align*}
Therefore, in the rest of the proof, it suffices to consider $\tilde
X_{n,k,i}$. For notational simplicity, we still use $\tilde X_{n,k,i}$
to denote its centered version with mean zero.

Elementary calculations show that
\begin{align}
  \label{eq:3.11}
  \max_{1\leq i \leq j \leq m} |\tilde \sigma_{n,i,j} - \sigma_{n,i,j}| & \leq C n^{-(p-2)/4} (\log n)^{p-2},
  \quad\hbox{and} \\ \label{eq:3.12}
  \max_{\alpha, \beta \in \mathcal{I}_n} \left|\Cov(\tilde X_{n,\alpha}, \tilde X_{n,\beta} ) -
  \Cov(X_{n,\alpha}, X_{n,\beta})\right| & \leq C n^{-(p-4)/4} (\log n)^{p-4}.
\end{align}
By (\ref{eq:3.11}), we know the covariance matrix of $(\tilde
X_{n,\alpha})_{\alpha\in\mathcal{I}_n}$ satisfies either (B1) or (B2)
if $\Sigma_n$ satisfies (B1) or (B2) correspondingly.
Since $$\E_0\tilde X_{n,\alpha} \in \mathscr{B}\left[1,8\sqrt{n}/(\log
  n)^2\right],$$ we know all the conditions of
Lemma~\ref{thm:max_mean} are satisfied, and hence (\ref{eq:3.4}) holds
for $M_{n,1}$.  Combining (\ref{eq:3.11}) and (\ref{eq:3.12}), we know
(\ref{eq:3.4}) also holds with if we replace $M_{n,1}$ by $M_{n,2}$.

\paragraph{\underline{\textnormal{\em Step 2: Effect of Estimated Means}}}

Using Bernstein's inequality, we can show
\begin{align*}
  \max_{1 \leq i \leq m}|\bar X_{n,i}| = O_P\left(\sqrt{\frac{\log n}{n}}\right),
\end{align*}
which implies that
\begin{align*}
  \left|M_{n,3} - M_{n,2}\right| = O_P\left(\frac{\log n}{n}\right) 
\end{align*}
and hence (\ref{eq:3.4}) also holds for $M_{n,3}$.

\paragraph{\underline{\textnormal{\em Step 3: Effect of Estimated Variances}}}
It suffices to show that
\begin{align}
  \label{eq:3.13}
  \max_{1\leq i<j\leq m} \left|\check\tau_{n,i,j}-\tilde\tau_{n,i,j}\right| = o_P(1/\log n).
\end{align}
Using (\ref{eq:3.4}), we know
\begin{align}
  \label{eq:3.15}
  \max_{1\leq i<j\leq m} \left|\check\tau_{n,i,j,1}-\check\tau_{n,i,j}\right| = O_P \left(\log n/n\right).
\end{align}
Since
\begin{align*}
  \left(\tilde X_{n,k,i} \tilde X_{n,k,j} - {\tilde\sigma_{n,i,j}}\right)^2 \leq 64 n/(\log n)^4.
\end{align*}
By Corollary~1.6 of \cite{nagaev:1979} (with $x=n/(\log n)^2$ and
$y=n/[2(\log n)^3]$ in their inequality (1.22)), we have
\begin{align*}
  \max_{1\leq i<j\leq m}
  P\left(|\check\tau_{n,i,j,2}-\tilde\tau_{n,i,j}| \geq (\log
    n)^{-2}\right) & \leq \left[\frac{Cn}{n(\log n)^{-2} \cdot [n(\log
      n)^{-3}/2]^{q\wedge 1}}\right]^{\log n} \\
  & \leq \left[\frac{C(\log n)^5}{n^{q\wedge 1}}\right]^{\log n},
\end{align*}
and it follows that
\begin{align}
  \label{eq:3.16}
  \max_{1\leq i<j\leq m} \left|\check\tau_{n,i,j,2}
  -\tilde\tau_{n,i,j}\right| = O_P\left[(\log n)^{-2}\right].
\end{align}
In view of (\ref{eq:3.15}), (\ref{eq:3.16}), we know to show
(\ref{eq:3.13}), it remains to prove
\begin{align}
  \label{eq:3.17}
  \max_{1\leq i<j\leq m} \left|\check\tau_{n,i,j,1}-\check\tau_{n,i,j,2}\right| = o_P(1/\log n).
\end{align}
We know $h_{n,1}=O_P(\sqrt{\log n/n})$ and $h_{n,4}=O_P(1)$. Using the
Bernstein's inequality, we can show that
\begin{align*}
  h_{n,3}= O_P\left(\sqrt{{\log n}/{n}}\right),
\end{align*}
and it follows that $h_{n,2}=O_P(1)$. Therefore,
\begin{align*}
  \max_{1\leq i<j\leq m} \left|\check\tau_{n,i,j,1}-\check\tau_{n,i,j,2}\right| = O_P\left(\sqrt{{\log n}/{n}}\right),
\end{align*}
and (\ref{eq:3.17}) holds. The proof of Theorem~\ref{thm:max_cov} under
($\mathrm{A4}$) is now complete.

\section{Some auxiliary results}
\label{sec:auxiliary} In this section we provide a normal
comparison principle and a Gaussian approximation result, and a
Poisson convergence theorem.

\subsection{A normal comparison principle}
\label{sec:normal_comparison}
Suppose for each $n\geq 1$, $(X_{n,i})_{i\in\mathcal{I}_n}$ is a
Gaussian random vector whose entries have mean zero and variance one,
where $\mathcal{I}_n$ is an index set with cardinality
$|\mathcal{I}_n|=s_n$. Let
$\Sigma_n=(r_{n,i,j})_{i,j\in\mathcal{I}_n}$ be the covariance matrix
of $(X_{n,i})_{i\in\mathcal{I}_n}$. Assume that $s_n\rightarrow\infty$
as $n\rightarrow\infty$.

We impose either of the following two conditions.
\begin{equation*}
  \begin{aligned}
    \hbox{({\bf B1}) } & \hbox{For any sequence $(b_n)$ such that $b_n\rightarrow\infty$, }
    \gamma(n,b_n)= o\left({1}/{\log b_n}\right);\\
    & \hbox{and } \limsup_{n\rightarrow\infty}\gamma_n<1. \\
    \hbox{({\bf B2}) } & \hbox{For any sequence $(b_n)$ such that $b_n\rightarrow\infty$, }
    \gamma(n,b_n)=o(1);\\
    & \sum_{i\neq j \in \mathcal{I}_n} r_{n,i,j}^2=O\left(s_n^{2-\delta}\right)
     \hbox{ for some } \delta>0; \hbox{ and }
    \limsup_{n\rightarrow\infty}\gamma_n<1.
  \end{aligned}
\end{equation*}
where
\begin{align*}
  & \gamma(n,b_n):=\sup_{i\in\mathcal{I}_n}
  \sup_{\mathcal{A}\subset\mathcal{I}_n,|\mathcal{A}|=b_n}
  \inf_{j\in \mathcal{A}}
  \left|r_{n,i,j}\right|\\
  & \hbox{and} \quad
  \gamma_n:=\sup_{i,j\in\mathcal{I}_n;\; i\neq j}|r_{n,i,j}|.
\end{align*}

\begin{lemma}\label{thm:normal_comparison}
  Assume either (B1) or (B2). For a positive real number $z_n$, define
  \begin{equation*}
    A_{n,i}'=\{|X_{n,i}| > z_n\} \quad\hbox{and}\quad
    Q_{n,d}' = \sum_{\mathcal{A}\subset\mathcal{I}_n,|\mathcal{A}|=d}
    P\left(\bigcap_{i\in\mathcal{A}} A_{n,i}'\right).
  \end{equation*}
  If $z_n$ satisfies that $z_n^2 = 2\log s_n - \log\log s_n -\log
  \pi + 2z + o(1)$, then for all $d \geq 1$.
  \begin{equation*}
  \lim_{n \to \infty} Q_{n,d}' = \frac{e^{-dz}}{d\,!},
\end{equation*}
\end{lemma}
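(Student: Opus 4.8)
The plan is to match $Q'_{n,d}$ with the value it would take if the coordinates were independent, where the limit is exactly $e^{-dz}/d!$.

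First I would record the marginal. Set $p_n=P(|X_{n,i}|>z_n)=2\bigl(1-\Phi(z_n)\bigr)$, with $\Phi$ and $\varphi$ the standard normal distribution function and density. Mills' ratio $1-\Phi(z_n)\sim\varphi(z_n)/z_n$ together with the calibration $z_n^2=2\log s_n-\log\log s_n-\log\pi+2z+o(1)$ gives $s_np_n\to e^{-z}$; this already settles $d=1$, since $Q'_{n,1}=s_np_n$. If the $X_{n,i}$ were i.i.d. standard normal, the number $N$ of exceedances $\{|X_{n,i}|>z_n\}$ would be Binomial$(s_n,p_n)$, and its normalized $d$th factorial moment would be $\binom{s_n}{d}p_n^d\to e^{-dz}/d!$. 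Since $Q'_{n,d}=\frac{1}{d!}E[N(N-1)\cdots(N-d+1)]$ also for the dependent vector, it suffices to prove that the dependence is asymptotically negligible, i.e. $Q'_{n,d}=\bigl(1+o(1)\bigr)\binom{s_n}{d}p_n^d$.

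The engine is Gaussian interpolation along the covariance (Plackett's identity): for a fixed $d$-set $\mathcal A$, the quantity $P\bigl(\bigcap_{i\in\mathcal A}A'_{n,i}\bigr)-\prod_{i\in\mathcal A}P(A'_{n,i})$ is written as a sum over pairs $\{i,j\}\subset\mathcal A$ of an integral of the bivariate normal density over correlation, times the conditional exceedance probability of the remaining $d-2$ coordinates. The point is that this conditional probability is $p_n^{d-2}\bigl(1+o(1)\bigr)$, so after summing over all $\binom{s_n}{d}$ subsets the combinatorial weight $\binom{s_n-2}{d-2}p_n^{d-2}\to e^{-(d-2)z}/(d-2)!$ stays \emph{bounded}, and the entire deviation $|Q'_{n,d}-\binom{s_n}{d}p_n^d|$ is controlled by the single pairwise sum $S_n:=\sum_{i<j}|r_{n,i,j}|\exp\bigl(-z_n^2/(1+|r_{n,i,j}|)\bigr)$. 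Retaining the factor $p_n^{d-2}$ is essential: the cruder additive normal comparison, which bounds that conditional probability by $1$, would carry an extra factor $\binom{s_n}{d-2}$ and diverge once $d\ge3$.

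It then remains to show $S_n\to0$, which is where (B1) and (B2) enter. Using $\limsup_n\gamma_n=\rho_0<1$, I split the pairs at a threshold $\epsilon$: a pair is \emph{strong} if $|r_{n,i,j}|>\epsilon$ and \emph{weak} otherwise. A strong pair carries the enhanced factor $\exp(-z_n^2/(1+\rho_0))\asymp s_n^{-2/(1+\rho_0)}$ with $2/(1+\rho_0)>1$, but $\gamma(n,b_n)\to0$ bounds the number of strong partners of each index, so strong pairs are too sparse to matter and their total contribution to $S_n$ is $o(1)$. For weak pairs, $\exp(-z_n^2/(1+|r_{n,i,j}|))\asymp s_n^{-2/(1+\epsilon)}$, and one plays this decay against the number and size of weak correlations. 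Under (B2) a fixed $\epsilon<\delta/8$ suffices: $\sum_{i\ne j}r_{n,i,j}^2=O(s_n^{2-\delta})$ and Cauchy--Schwarz give $\sum_{\mathrm{weak}}|r_{n,i,j}|=O(s_n^{2-\delta/2})$, whence the weak contribution is $O(s_n^{2\epsilon/(1+\epsilon)-\delta/2})\to0$, and at the fixed threshold each index has $O(1)$ strong partners. Under (B1) there is no $\ell^2$ control, so I take a \emph{vanishing} threshold $\epsilon_n=\gamma(n,\lfloor s_n^{c/2}\rfloor)$ with $c=2/(1+\rho_0)-1$; then $\gamma(n,b_n)\log b_n=o(1)$ gives $\epsilon_n\log s_n\to0$, which kills the weak contribution $O(\epsilon_n(\log s_n)s_n^{2\epsilon_n})$, while the choice of $b_n$ keeps the strong degree below $s_n^{c/2}=o(s_n^{c})$. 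The main obstacle is precisely this control of $S_n$: the tension between the enhancement of the joint Gaussian tail for correlated pairs and the sparsity of such pairs, the calibration of the threshold to each hypothesis (a fixed $\epsilon$ under the $\ell^2$ bound (B2) versus a vanishing $\epsilon_n$ tied to $\gamma(n,b_n)\log b_n=o(1)$ under (B1)), and the uniform verification that in the interpolation the conditional exceedance probability of the untouched coordinates really is $p_n^{d-2}(1+o(1))$, which itself rests on those coordinates being weakly correlated with the interpolated pair.
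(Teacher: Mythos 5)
Your overall architecture is the right one, and several pieces are sound: the Mills-ratio calibration giving $s_np_n\to e^{-z}$ (hence the case $d=1$), the identification of $Q'_{n,d}$ with a normalized factorial moment, the observation that the crude additive normal comparison picks up a fatal factor $\binom{s_n-2}{d-2}$ for $d\ge 3$, and the control of the pairwise sum $S_n$ under (B1) (vanishing threshold $\epsilon_n$ with $\epsilon_n\log s_n\to0$) and under (B2) (fixed threshold plus Cauchy--Schwarz on the $\ell^2$ bound). Note that the paper itself omits the proof, deferring to a supplement and describing the lemma as a refinement of Lemma~20 of Xiao and Wu (2011); the refinement in question is exactly the step you have left unproved.

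That step is a genuine gap, not a routine verification. Your entire reduction to $S_n$ rests on the claim that, in the Plackett interpolation, the conditional probability that the $d-2$ untouched coordinates exceed $z_n$ given $X_i=\pm z_n$, $X_j=\pm z_n$ is uniformly $p_n^{d-2}(1+o(1))$. Under (B1)/(B2) this is false as a uniform statement: by Remark~\ref{rk:empirical}, these conditions only force the number of partners of a given index with correlation exceeding a fixed $t>0$ to be bounded, so a coordinate $k\in A\setminus\{i,j\}$ may well have $|r_{ki}|$ bounded away from zero (anything up to $\gamma_n<1$ is allowed). Conditioning on $X_i=z_n$ then shifts the mean of $X_k$ by an amount of order $z_n$, and $P(|X_k|>z_n\mid X_i=z_n)$ is polynomially larger than $p_n$ --- possibly of order one when the correlation is close to $1$. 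So the per-subset error is not $O\bigl(p_n^{d-2}\sum_{i<j\in A}|r_{ij}|e^{-z_n^2/(1+|r_{ij}|)}\bigr)$ for every $A$, and $|Q'_{n,d}-\binom{s_n}{d}p_n^d|=O(S_n)$ does not follow. To close the argument you must split the family of $d$-subsets itself (not just the pairs inside $S_n$) into those in which all coordinates are pairwise weakly correlated --- where your estimate is valid --- and the $O(s_n^{d-1})$ ``bad'' subsets containing a strong pair, whose total contribution to $Q'_{n,d}$ must be bounded separately; for $d\ge4$ the bad subsets can themselves contain further strong pairs among the remaining coordinates, so this requires an induction on $d$ (or an equivalent recursive conditioning) rather than a one-line count. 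You flag the issue in your closing sentence, but flagging it is not the same as resolving it, and it is precisely the content that distinguishes this lemma from the classical normal comparison inequality.
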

Lemma~\ref{thm:normal_comparison} is a refined version of Lemma~20 in
\cite{wu:2011a}, so we omit the proof and put the details in a
supplementary file.

\begin{remark}
  \label{rk:empirical}
  The conditions imposed on $\gamma(n,b_n)$ seem a little involved. We
  have the following equivalent versions. Define
  \begin{align*}
    G_n(t) = \max_{i \in \mathcal{I}_n} \sum_{j\in\mathcal{I}_n}
    I\{|r_{n,i,j}|>t\}.
  \end{align*}
Then (i) $\gamma(n,b_n)=o(1)$ for any sequence $b_n \to \infty$ if
and only if the sequence $[G_n(t)]_{n\geq 1}$ is bounded for all
$t>0$; and (ii) $\gamma(n,b_n)(\log b_n)=o(1)$ for any sequence
$b_n \to \infty$ if and only if $G_n(t_n) = \exp \{o(1/t_n)\}$ for
any positive sequence $(t_n)$ converging to zero.
\end{remark}

\subsection{A Gaussian approximation result}
For a positive integer $d$, let $\mathfrak{B}_d$ be the Borel
$\sigma$-field on the Euclidean space $\mathbb{R}^d$. For two
probability measures $P$ and $Q$ on $\left(\mathbb{R}^d,
  \mathfrak{B}_d\right)$ and $\lambda>0$, define the quantity
\begin{equation*}
  \pi(P,Q;\lambda) =
  \sup_{A\in\mathfrak{B}_d}\left\{\max \left[P(A) - Q\left(A^\lambda\right),Q(A) - P\left(A^\lambda\right)\right]\right\},
\end{equation*}
where $A^{\lambda}$ is the $\lambda$-neighborhood of $A$
\begin{equation*}
  A^\lambda := \left\{x \in \mathbb{R}^d:\;\inf_{y\in A}|x-y|<\lambda\right\}.
\end{equation*}

For $\tau>0$, let $\mathscr{B}(d,\tau)$ be the collection of
$d$-dimensional random variables which satisfy the multivariate
analogue of the Bernstein's condition. Denote by $(x,y)$ the inner
product of two vectors $x$ and $y$.
\begin{equation}
  \label{eq:bernstein}
  \begin{aligned}
    \mathscr{B}(d,\tau)=& \left\{
    \xi \hbox{ is a random variable}:\;\E \xi=0, \hbox{ and }
      \phantom{\langle\xi,t\rangle^2}\right.\\
    & \left| \E \left[(\xi,t)^2(\xi,u)^{m-2}\right] \right|
      \leq \frac{1}{2}m!\tau^{m-2}\|u\|^{m-2}\E\left[(\xi,t)^2\right]  \\
    & \left.\hbox{for every }
    m=3,4,\ldots \hbox{ and for all } t,u\in\mathbb{R}^d
    \right\}.
  \end{aligned}
\end{equation}
The following Lemma on the Gaussian approximation is taken from
\cite{zaitsev:1987}.
\begin{lemma}
  \label{thm:zaitsev}
  Let $\tau>0$, and $\xi_1,\xi_2,\ldots,\xi_n \in \R^d$ be independent
  random vectors such that $\xi_i\in\mathscr{B}(d,\tau)$ for
  $i=1,2,\ldots,n$. Let $S=\xi_1+\xi_2+\ldots+\xi_n$, and
  $\mathscr{L}(S)$ be the induced distribution on $\R^d$. Let $\Phi$
  be the Gaussian distribution with the zero mean and the same
  covariance matrix as that of $S$. Then for all $\lambda>0$
  \begin{equation*}
    \pi[\mathscr{L}(S),\Phi;\lambda]
    \leq c_{1,d} \exp\left(-\frac{\lambda}{c_{2,d}\tau}\right),
  \end{equation*}
where the constants $c_{j,d},\;j=1,2$ may be taken in the form
$c_{j,d} = c_j d^{5/2}$.
\end{lemma}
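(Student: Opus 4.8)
The plan is to reconstruct the argument of \cite{zaitsev:1987}, of which this lemma is the main theorem; I record the strategy one follows to obtain a L\'evy--Prokhorov-type bound of this exponential form. The quantity $\pi[\mathscr{L}(S),\Phi;\lambda]$ is a smoothed (L\'evy--Prokhorov) distance, so the natural route is through characteristic functions together with a smoothing inequality. Writing $\hat f(t)=\E\exp\{i(S,t)\}$ for the characteristic function of $S=\xi_1+\cdots+\xi_n$ and $\hat g(t)$ for that of the matching Gaussian $\Phi$, the first step is to show that the hypothesis $\xi_k\in\mathscr{B}(d,\tau)$ forces $\hat f$ and $\hat g$ to be uniformly close on a ball $\|t\|\lesssim 1/\tau$.

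First I would exploit the Bernstein bound in (\ref{eq:bernstein}) to control all the cumulants of each $\xi_k$: taking $t=u$ in that inequality yields the classical Bernstein moment condition $\E(\xi_k,u)^m\le\frac{1}{2} m!\,\tau^{m-2}\|u\|^{m-2}\E(\xi_k,u)^2$, which makes the cumulant generating function $\log\E\exp\{(\xi_k,t)\}$ analytic and bounded for $\|t\|\lesssim 1/\tau$ and, in particular, guarantees finite exponential moments. Summing over $k$ and subtracting the Gaussian part, one obtains on that ball an expansion $\log\hat f(t)-\log\hat g(t)=O(\tau\|t\|^3\,\tr[\Cov(S)])$, so the two characteristic functions agree up to a factor that is small once $\|t\|$ sits well below $1/\tau$.

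The second, and genuinely hard, step is to pass from this local closeness of characteristic functions to a bound on $\pi$ that is simultaneously (i) uniform over all Borel sets $A\in\mathfrak{B}_d$, (ii) exponentially decaying in $\lambda/\tau$, and (iii) equipped with the explicit constant $c_{j,d}=c_j d^{5/2}$. Here I would follow Zaitsev's method of conjugate (exponentially tilted) distributions: to control $P(A)-Q(A^\lambda)$ when $A$ lies far out in some direction $u$, one tilts both $\mathscr{L}(S)$ and $\Phi$ by $\exp\{(\cdot,u)\}$---legitimate precisely because $\mathscr{B}(d,\tau)$ supplies the finite exponential moments---applies the local comparison to the tilted laws, and then untilts. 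The cost of the tilt is a factor $\exp\{-\lambda/(c\tau)\}$, which is the source of the exponential rate, while the accompanying smoothing step (convolving with a kernel of width $\sim\lambda$ and inverting via a multidimensional smoothing inequality) produces the polynomial-in-$d$ constants.

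I expect the dimensional bookkeeping in this last step to be the main obstacle. Since $\mathscr{B}(d,\tau)$ is stated coordinate-freely, the cumulant estimates and the tilting argument transfer to arbitrary dimension without difficulty; but extracting the sharp power $d^{5/2}$---rather than a crude exponential-in-$d$ constant---requires the delicate multidimensional smoothing and inversion inequalities of \cite{zaitsev:1987} together with a carefully optimized choice of the tilt parameter against $\lambda$ and $\tau$. Because the present paper invokes this lemma only as a black box, I would ultimately cite \cite{zaitsev:1987} for the complete argument rather than reproduce this delicate part in full.
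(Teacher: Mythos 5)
The paper offers no proof of this lemma at all---it is imported verbatim from \cite{zaitsev:1987} and used as a black box---and your proposal likewise ends by deferring to that reference, so the two treatments coincide in substance. Your intervening sketch (cumulant control from the Bernstein condition, conjugate tilting for the exponential rate, multidimensional smoothing for the $d^{5/2}$ constants) is a fair description of Zaitsev's method, but nothing in the paper requires you to reproduce it.
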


\subsection{Poisson approximation: moment method}
\begin{lemma}
  \label{thm:poisson}
  Suppose for each $n\geq 1$, $(A_{n,i})_{i\in\mathcal{I}_n}$ is a
  finite collection of events. Let $I_{A_{n,i}}$ be the indicator
  function of $A_{n,i}$, and $W_n=\sum_{i\in\cal I}I_{A_{n,i}}$. For
  each $d \geq 1$, define
  \begin{equation*}
    Q_{n,d} = \sum_{\mathcal{A}\subset\mathcal{I}_n,|\mathcal{A}|=d}
    P\left(\bigcap_{i\in\mathcal{A}} A_{n,i}\right).
  \end{equation*}
  Suppose there exists a $\lambda>0$ such that
  \begin{equation*}
    \lim_{n\rightarrow\infty} Q_{n,d} = {\lambda^d}/{d\,!} \hbox{ for each } d\geq 1.
  \end{equation*}
  Then
  \begin{equation*}
    \lim_{n\rightarrow\infty} P(W_n = k) = \lambda^ke^{-\lambda}/k\,! \hbox{ for each } k\geq 0.
  \end{equation*}
\end{lemma}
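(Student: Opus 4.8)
The plan is to treat $Q_{n,d}$ as the $d$-th \emph{binomial moment} of the count $W_n$ and to recover the point probabilities $P(W_n=k)$ by an inclusion--exclusion inversion. First I observe that, since $\binom{W_n}{d}=\sum_{\mathcal A\subset\mathcal I_n,\,|\mathcal A|=d}\prod_{i\in\mathcal A}I_{A_{n,i}}$, taking expectations gives $Q_{n,d}=\E\binom{W_n}{d}$; thus the hypothesis says precisely that every binomial moment of $W_n$ converges to $\lambda^d/d!$, which is the $d$-th binomial moment of the Poisson$(\lambda)$ law. Encoding this in a generating function, $\E(1+x)^{W_n}=\sum_{d\ge0}Q_{n,d}\,x^d$, a polynomial of degree at most $|\mathcal I_n|$ because $W_n\le|\mathcal I_n|$. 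Substituting $x=z-1$ and reading off the coefficient of $z^k$ yields the finite identity
\[
P(W_n=k)=\sum_{d=k}^{|\mathcal I_n|}(-1)^{d-k}\binom{d}{k}Q_{n,d}.
\]

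The difficulty is that the upper summation index $|\mathcal I_n|$ depends on $n$, so I cannot pass to the limit termwise without uniform control on the tail of the sum. I sidestep this with Bonferroni bracketing: for each fixed $t\ge0$,
\[
\sum_{d=k}^{k+2t+1}(-1)^{d-k}\binom{d}{k}Q_{n,d}\le P(W_n=k)\le\sum_{d=k}^{k+2t}(-1)^{d-k}\binom{d}{k}Q_{n,d}.
\]
These follow from a pointwise sign analysis. On the event $\{W_n=\ell\}$ the truncated quantity $\sum_{d=k}^{r}(-1)^{d-k}\binom{d}{k}\binom{\ell}{d}$ equals, via the subset identity $\binom{d}{k}\binom{\ell}{d}=\binom{\ell}{k}\binom{\ell-k}{d-k}$ and the partial alternating-sum identity $\sum_{i=0}^{M}(-1)^i\binom{N}{i}=(-1)^M\binom{N-1}{M}$,
\[
\binom{\ell}{k}(-1)^{r-k}\binom{\ell-k-1}{r-k}.
\]
This is exactly $\mathbf 1\{\ell=k\}$ when the summation is complete, vanishes for $\ell<k$, and for $\ell>k$ carries the fixed sign $(-1)^{r-k}$ since $\binom{\ell}{k}>0$ and $\binom{\ell-k-1}{r-k}\ge0$. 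Hence the truncated expression lies pointwise above (when $r-k$ is even) or below (when $r-k$ is odd) the indicator $\mathbf 1\{W_n=k\}$, and taking expectations delivers the displayed inequalities.

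With $P(W_n=k)$ now trapped between two sums each having a \emph{fixed} number of terms, I let $n\to\infty$ termwise, using $Q_{n,d}\to\lambda^d/d!$, to obtain
\[
\sum_{d=k}^{k+2t+1}(-1)^{d-k}\binom{d}{k}\frac{\lambda^d}{d!}\le\liminf_n P(W_n=k)\le\limsup_n P(W_n=k)\le\sum_{d=k}^{k+2t}(-1)^{d-k}\binom{d}{k}\frac{\lambda^d}{d!}.
\]
Letting $t\to\infty$, the alternating series converges absolutely, and a short computation (set $j=d-k$) identifies the common limit as
\[
\sum_{d=k}^{\infty}(-1)^{d-k}\binom{d}{k}\frac{\lambda^d}{d!}=\frac{\lambda^k}{k!}\sum_{j\ge0}\frac{(-\lambda)^j}{j!}=\frac{\lambda^k e^{-\lambda}}{k!},
\]
and squeezing gives the claim.

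The main obstacle is the Bonferroni step: everything else is bookkeeping, but the bracketing is exactly what reduces an $n$-dependent sum to finitely many termwise limits, and its justification rests entirely on the sign of the truncated indicator expansion above. An alternative route avoids Bonferroni altogether by invoking the Fr\'echet--Shohat moment-convergence theorem: convergence of the binomial moments forces convergence of all ordinary moments $\E W_n^r$, and since the Poisson law is determined by its moments, $W_n$ converges in distribution to Poisson$(\lambda)$, whence the integer-valued point probabilities converge. I prefer the Bonferroni argument here because it is self-contained and yields the point probabilities directly.
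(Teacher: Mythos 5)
Your argument is correct. The identity $Q_{n,d}=\E\binom{W_n}{d}$ is right (the $d$-fold intersections are indexed by $d$-subsets of the indices where $I_{A_{n,i}}=1$), the inversion formula follows from reading off coefficients in $\E z^{W_n}=\sum_d Q_{n,d}(z-1)^d$, and the sign analysis behind the Bonferroni bracketing checks out: on $\{W_n=\ell\}$ the truncated sum equals $\binom{\ell}{k}(-1)^{r-k}\binom{\ell-k-1}{r-k}$, which is $\mathbf 1\{\ell=k\}$ plus a term of fixed sign $(-1)^{r-k}$, so the two-sided bounds with a fixed number of terms are valid, and the squeeze as $t\to\infty$ gives the Poisson probabilities. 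The paper omits the proof entirely, remarking only that $d!\,Q_{n,d}$ is the $d$-th factorial moment of $W_n$ and that the lemma ``is essentially the moment method,'' i.e.\ the intended route is your second one: factorial-moment convergence implies ordinary-moment convergence, and the Poisson law is moment-determinate, so $W_n$ converges in distribution and hence in its point masses. Your primary Bonferroni argument is genuinely different in execution: it is longer but entirely self-contained, requires no appeal to moment-determinacy or to the Fr\'echet--Shohat theorem, and produces the convergence of $P(W_n=k)$ directly rather than via weak convergence of integer-valued variables. Either route is acceptable; yours trades brevity for elementarity.
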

Observe that for each $d\geq 1$, the $d$-th factorial moment of $W_n$
is given by
\begin{equation*}
  \E\left[W_n(W_n-1)\cdots(W_n-d+1)\right] = d\,! \cdot  Q_{n,d},
\end{equation*}
so Lemma~\ref{thm:poisson} is essentially the moment method. The
proof is elementary, and we omit details.


\renewcommand{\baselinestretch}{1}

\bibliographystyle{plainnat}
\bibliography{mybib}

\end{document}


\begin{frontmatter}

\title{Supplementary file of \\ Simultaneous Inference of Covariances}
\runtitle{Supplementary File}
\date{}

\begin{aug}
\author{\fnms{Han}
  \snm{Xiao}\ead[label=e1]{xiao@stat.rutgers.edu}}
\and
\author{\fnms{Wei Biao}
  \snm{Wu}\ead[label=e2]{wbwu@galton.uchicago.edu}}
\runauthor{H.~Xiao, W.B.~Wu}


\affiliation{Rutgers University \and University of Chicago}
\address{
501 Hill Center\\
110 Frelinghuysen Road\\
Piscataway, NJ 08854\\
\printead{e1}
}
\address{
Department of Statistics \\
5734 S.~University Ave\\
Chicago, IL  60637\\
\printead{e2}}
\end{aug}

\end{frontmatter}

In this document we give the proofs of Lemma~3, Lemma~4 and Lemma~7 of
the main article.

\begin{proof}[Proof of Lemma~3]
  Assume $X_i$ has mean zero and variance one. Let
  $\gamma_k=\E(X_0X_k)$ be the autocovariance of lag $k$. Then by
  Proposition~8,~Eq.~(34) of \cite{wu:2011a}, we know
  \begin{align}
    \label{eq:auto_cov} \seqla
    |\gamma_k| \leq \Psi_2\cdot\Psi_2(|k|).
  \end{align}
  \begin{enumerate}
  \item [(i)] Since $\Psi_4<\infty$, we know for any $\eta>0$, there
    exists a $N_1>0$ such that $|\gamma_k|<\eta$ when $k\geq N_1$. For
    $j\leq k$, define $\tilde
    X_{k,j}=g(\epsilon_k,\ldots,\epsilon_{j+1},\epsilon_j',\epsilon_{j-1}',\ldots)$,
    where $(\epsilon_i')_{i\in\Z}$ is an i.i.d. copy of
    $(\epsilon_i)_{i\in\Z}$.  By Eq. (38) of \cite{wu:2011a}, we know
    there exists a $N_2>0$ such that when $k\geq N_2$, $\|X_k-\tilde
    X_k\|_4 \leq \eta$. Set $N=\max\{N_1,N_2\}$, when $k \geq N$, we
    have
    \begin{align*}
      \Var(X_0X_k) & =\E(X_0^2X_k^2) - \gamma_k^2 =
      \E\left(X_k^2X_{k,j}^2\right) + \E\left[X_0^2(X_k^2-X_{k,j}^2)\right] - \gamma_k^2 \\
      & \geq 1 - \eta^2 - 2\|X_0\|_4^3\cdot\eta.
    \end{align*}
    Therefore, (A1) holds because $\eta$ can be arbitrarily small.
  \item [(ii)] We need to show that
    \begin{align*}
      \sup_{j\geq 0,\, 0\leq k \leq l,\, (0,j)\neq(k,l)}\Cor(X_0X_j,X_kX_l) < 1.
    \end{align*}
    It suffices to show that for some $N>0$
    \begin{align*}
      \sup_{j\geq 0,\, 0\leq k \leq l,\, (0,j)\neq(k,l),\, j+k+l\geq N}\Cor(X_0X_j,X_kX_l) < 1.
    \end{align*}
    If $j+k+l\geq N$, then the set $\{0,j,k,l\}$ can be partitioned
    into two non-empty subsets $\mathcal{B}_1$ and $\mathcal{B}_2$
    whose distance is no less than $N/6$. We only consider this type
    of partitions. If there is a partition such that one of
    $\mathcal{B}_1$ and $\mathcal{B}_2$ has cardinality one, then
    similarly as (i), we know for any $\eta>0$, when $N$ is large
    enough,
    \begin{align*}
      |\Cov(X_0X_j,X_kX_l)| = |\E(X_0X_jX_kX_l) - \gamma_j\gamma_{l-k}| \leq \eta.
    \end{align*}
    If for any partition both $\mathcal{B}_1$ and $\mathcal{B}_2$ has
    cardinality two, there are two sub-cases. (a) $j<k\leq l$ and
    $k-j\geq N/6$. For any $\eta>0$, when $N$ is large enough, we have
    \begin{align*}
      |\Cov(X_0X_j,X_kX_l)| = \left|\E\left[X_0X_j(X_kX_l-X_{k,j}X_{l,j})\right]\right| \leq \eta.
    \end{align*}
    (b) $\min\{j,l\}-k\geq N/6$. As in (i), for any $\eta>0$, when $N$
    is large enough, we have $\Var(X_0X_j)\geq 1- \eta$,
    $\Var(X_kX_l)\geq 1-\eta$, and $|\gamma_j\gamma_{l-k}|<\eta$. On
    the other hand, the condition $\Psi_4>0$ guarantees that the
    process is non-deterministic, and hence $\gamma:=\sup_{t\geq
      1}|\gamma_t|<1$. It follows that when $N$ is large enough
    \begin{align*}
      |\E(X_0X_jX_kX_l)| & = |\E(X_0X_{j,k}X_kX_{l,k}) + \E[X_0X_k(X_jX_l-X_{j,k}X_{l,k})]| \\
      & \leq \gamma + \eta.
    \end{align*}
    Therefore,
    \begin{align*}
      |\Cor(X_0X_j,X_kX_l)| \leq (\gamma + 2\eta)/(1-\eta) <1
    \end{align*}
    when $\eta$ is small enough. The proof of (ii) is now complete.
  \item [(iii)] We first consider ($\mathrm{A3}$). Note that
    \begin{align*}
      \Cov(X_iX_j,X_kX_l) & = \mathrm{Cum}(X_i,X_j,X_k,X_l) + \gamma_{i-k}\gamma_{j-l}+\gamma_{i-l}\gamma_{j-k},
    \end{align*}
    where $\mathrm{Cum}(X_i,X_j,X_k,X_l)$ is the fourth order joint
    cumulant of the random vector $(X_i,X_j,X_k,X_l)^\top$. Fix a
    subset $\{i,j\}$, for any integer $b>0$, there are at most $8b^2$
    subsets $\{k,l\}$ such that $\{k.l\}\subset B(i;b)\cup B(j;b)$,
    where $B(x;r)$ is the open ball $\{y:|x-y|<r\}$. For all other
    subsets $\{k,l\}$, by (\ref{eq:auto_cov}), we have
    \begin{align*}
      |\gamma_{i-k}\gamma_{j-l}+\gamma_{i-l}\gamma_{j-k}| \leq C\Psi_4(b).
    \end{align*}
    On the other hand, using similar arguments as Theorem~21 of
    \cite{wu:2011a}, we can show that
    \begin{align*}
      |\mathrm{Cum}(X_i,X_j,X_k,X_l)| \leq C\Psi_4(\floor{b/2}).
    \end{align*}
    Therefore, if $\Psi_4(k)=o(1/\log k)$ as $k\rightarrow\infty$,
    then ($\mathrm{A3}$) holds.

    Now we turn to ($\mathrm{A3'}$). Write
    $$\Cov(X_iX_j,X_kX_l)=\E(X_iX_jX_kX_l) -
    \gamma_{i-j}\gamma_{k-l}.$$ By (\ref{eq:auto_cov}), it is easily
    seen that
    \begin{align*}
      \sum_{1\leq i,j,k.l \leq m} \gamma_{i-j}^2\gamma_{k-l}^2 = O(m^{4-2\delta}).
    \end{align*}
    It then suffices to show
    \begin{align*}
      \sum_{1\leq i\leq j\leq k \leq l \leq m}[\E(X_iX_jX_kX_l)]^2 = O (m^{4-\delta}),
    \end{align*}
    which is true because by Eq. (38) of \cite{wu:2011a}
    \begin{align*}
      [\E(X_iX_jX_kX_l)]^2 = [\E(X_iX_jX_k(X_l-X_{l,k}))]^2 \leq 12\|X_0\|_4^6[\Psi_4(l-k)]^2.
    \end{align*}
  \end{enumerate}
  The proof of Lemma~3 is now complete.
\end{proof}

We now give the proof of Lemma~4.
\begin{proof}[Proof of Lemma~4]
  Suppose $(Y_1,Y_2,Y_3,Y_4)$ has a joint normal distribution. We can
  write $Y_i=\alpha_i^\top \h{Z}$, where $\h{Z}$ is a four dimensional
  standard Gaussian random vector. For any $0<\nu<1$, define the
  subset of $\R^{16}$,
  \begin{align*}
    D_{\nu}=\left\{(\alpha_1^\top,\alpha_2^\top,\alpha_3^\top,\alpha_4^\top):\,
      |\alpha_i|^2=1 \hbox{ and } |\alpha_i^\top\alpha_j|\leq 1-\nu \hbox{ for } 1 \leq i\neq j \leq 4.\right\}
  \end{align*}
  Since $|\Cor(Y_1Y_2,Y_3Y_4)|$ is a continuous function on $D_{\nu}$,
  and $D_{\nu}$ is compact, the maximum correlation is attained at
  some point in $D_\nu$.

  On the other hand, elementary calculation shows that
  $\Cor(Y_1Y_2,Y_3Y_4)=1$ if and only if $Y_1,Y_2,Y_3,Y_4$ are all
  perfectly correlated. The proof is now complete.
\end{proof}

The proof of Lemma~7 is a refined version of that of Lemma~20 in
\cite{wu:2011a}. We need the following bounds on normal tail
probabilities, which are taken from Lemma~19 of \cite{wu:2011a}.

Denote by $\varphi_d ((r_{ij}); x_1,\ldots, x_d)$ the density of a
$d$-dimensional multivariate normal random vector
$\h{X}=(X_1,\ldots,X_d)^\top$ with mean zero and covariance matrix
$(r_{ij})$, where we always assume $r_{ii}=1$ for $1 \leq i \leq d$
and $(r_{ij})$ is nonsingular. Let
$$Q_d\left((r_{ij});z\right) = \int_{z}^\infty
\cdots \int_{z}^{\infty}
\varphi_d\left((r_{ij}),x_1,\ldots,x_d\right) \dd x_{d} \cdots \dd
x_{1}.$$ 

\begin{lemmas}
  \label{thm:normbdd}
  For every $z>0$, $0<s<1$, $d \geq 1$ and $\epsilon>0$, there exists positive
  constants $C_d$ and $\epsilon_d$ such that for $0 < \epsilon < \epsilon_d$
  \begin{enumerate}
  \item if $|r_{ij}| < \epsilon$ for all $1 \leq i < j \leq d$, then
    \begin{align}
      \label{eq:2.normbdd3} \seqla
      Q_{d}\left((r_{ij});z\right) & \leq C_d \,f_d(\epsilon,1/z)\,
      \exp\left\{-\left(\frac{d}{2} - C_d \epsilon
        \right)z^2\right\} 
    \end{align}
    where $f_{2k}(x,y)=\sum_{l=0}^k x^ly^{2(k-l)}$ and
    $f_{2k-1}(x,y)=\sum_{l=0}^{k-1}x^ly^{2(k-l)-1}$ for $k \geq 1$;
  \item if for all $1 \leq i < j \leq d+1$ such that $(i,j)\neq(1,2)$,
    $|r_{ij}| \leq \epsilon$, then
    \begin{equation}
      \label{eq:2.normbdd4} \seqla
      Q_{d+1}\left((r_{ij});z\right) \leq C_d
      \exp\left\{-\left(\frac{(1-|r_{12}|)^2+d}{2}- C_d\epsilon\right)z^2\right\}.
    \end{equation}
  \end{enumerate}
\end{lemmas}

We first give a one-sided version of Lemma~7 and its proof, then we
show how it implies Lemma~7.

\begin{lemmas}
  \label{thm:normal_comparison_2}
  Assume either (B1) or (B2). For a positive real number $z_n$, define
  the event $A_{n,i}$ and $Q_{n,d}$ as 
  \begin{equation*}
    A_{n,i}=\{X_{n,i} > z_n\} \quad\hbox{and}\quad
    Q_{n,d} = \sum_{\mathcal{A}\subset\mathcal{I}_n,|\mathcal{A}|=d}P\left(\bigcap_{i\in\mathcal{A}} A_{n,i}\right).
  \end{equation*}
  If $z_n$ satisfies that $z_n^2 = 2\log s_n - \log\log s_n -\log
  (4\pi) + 2z + o(1)$, then for all $d \geq 1$
  \begin{equation*}
    \lim_{n \to \infty} Q_{n,d} = \frac{e^{-dz}}{d\,!}.
\end{equation*}
\end{lemmas}


\begin{proof}
  The following facts about normal tail probabilities are well-known:
  \begin{equation}
    \label{eq:2.normbdd} \seqla
    P(X_1\geq x) \leq \frac{1}{\sqrt{2\pi}x} e^{-x^2/2} \hbox{ for } x>0
    \quad \hbox{and} \quad
    \lim_{x \to \infty}\frac{P(X_1 \geq x)}{(1/x)(2\pi)^{-1/2}
    \exp\left\{-x^2/2\right\}}=1,
  \end{equation}
  By the assumption on $z_n$, if for each $n$,
  $X_{n,i},\,i\in\mathcal{I}_n$ are i.i.d., then by
  (\ref{eq:2.normbdd}),
  \begin{eqnarray*}
    \lim_{n \to \infty} Q_{n,d}
     &=& \lim_{n\to\infty} {n
    \choose d} Q_d(I_d,z_n) \cr
    &=& \lim_{n \to \infty} {n \choose d}
    \frac{1}{(2\pi)^{d/2}z_n^d}
          \exp\left\{-\frac{dz_n^2}{2}\right\}
     = \frac{e^{-dz}}{d!}.
  \end{eqnarray*}
  When the $X_{n,i}$'s are dependent, the result is still trivially
  true when $d=1$. Now we deal with the $d\geq 2$ case. Suppose
  $(b_n)$ is a sequence of positive numbers which converges to
  infinity. For each subset $J$ of $\mathcal{I}_n$ with cardinality
  $|J|=d$, we define an undirected graph $\mathscr{G}(J)$ by
  identifying each $i\in J$ with a node and saying $i$ and $j$ are
  adjacent if $|r_{n,i,j}|>\gamma(n,b_n)$. Suppose the graph
  $\mathscr{G}(J)$ has $d-s$ connected components
  $\mathcal{B}_1,\ldots,\mathcal{B}_{d-s}$.
  If $s \geq 1$, assume w.l.o.g.~that $|\mathcal{B}_1|\geq 2$. Pick
  $k_0,k_1 \in \mathcal{B}_1$, and $k_p \in \mathcal{B}_p$ for $2\leq
  p \leq d-s$, and set $K=\{k_0,k_1,k_2,\ldots,k_{d-s}\}$. Define
  $Q_J=P(\cap_{k \in J}A_k)$ and $Q_K$ similarly, then $Q_J \leq
  Q_K$. By (\ref{eq:2.normbdd4}) of Lemma~\ref{thm:normbdd}, there
  exists a number $M>1$ depending on $d$ and the sequences
  $(\gamma_n)$ and $(b_n)$, such that when $n\geq M$,
  \begin{align*}
    Q_K &\leq C_{d-s} \exp\left\{-\left(\frac{(1-\gamma_n)^2+d-s}{2}- C_{d-s}\gamma(n,b_n)\right)z_n^2\right\} \cr
    &\le C_{d-s}\exp\left\{-\left(\frac{d-s}{2} + \frac{(1-\gamma_n)^2}{3} \right) z_n^2\right\}.
  \end{align*}
  Note that $z_n^2 = 2\log s_n - \log\log s_n + O(1)$. Pick
  $b_n=\floor{s_n^{\alpha}}$ for some $\alpha <(1-\gamma_n)^2/3d$. For
  any $1 \leq a \leq d-1$, since there are at most $O\left(b_n^a
    s_n^{d-a}\right)$ subsets $J \subset \mathcal{I}_n$ such that
  $|J|=d$ and the graph $\mathscr{G}(L)$ has $d-a$ connected
  components, we know the sum of $Q_J$ over these $J$ is dominated by
  $$C_{d-a}\exp\left\{\log s_n \left( (d-a) + \frac{2(d-1)(1-\gamma_n)^2}{3d}
      -(d-a) - \frac{2(1-\gamma_n)^2}{3} \right)\right\}$$ when $n$ is
  large enough, which converges to zero. Therefore, it remains to
  consider all the subsets $J\subset \mathcal{I}_n$ such that the
  graph $\mathscr{G}(J)$ has no edges

  Let $J \subset \mathcal{I}_n$ be a subset such that $|J|=d$, and
  $\left|r_{n,i,j}\right|<\gamma(n,b_n)$ for all pairs $i,j$ such that
  $i,j\in J$ and $i\neq j$, and $\mathcal{J}(d,b_n)$ be the collection
  of all such subsets. Let $(r_{ij})_{i,j\in J}$ be the
  $d$-dimensional covariance matrix of $\h{X}_J := (X_{n,i})_{i \in
    J}$.  There exists a matrix $R_J=\theta(r_{ij})_{i,j\in
    J}+(1-\theta)I_d$ for some $0 < \theta < 1$ such that
  \begin{equation*}
    Q_J-Q_d(I_d,z_n) = \sum_{h,l \in J, h<l} \frac{\partial
      Q_d}{\partial r_{hl}}[R_J;z_n]r_{hl}.
  \end{equation*}
Let $R_H$, $H = J \setminus \{h,l\}$, be the correlation matrix of
the conditional distribution of $\h{X}_H$ given $X_h$ and $X_l$.
By (\ref{eq:2.normbdd3}) of Lemma~\ref{thm:normbdd}, for $n$ large
enough
  \begin{align*}
    \frac{\partial Q_{d}}{\partial r_{hl}}[R_J;z_n] & \leq C
    \exp\left\{-\frac{z_n^2}{1+|r_{n,h,l}|}\right\} \cdot
    Q_{d-2}\left(R_K;(1-3\gamma(n,b_n))z_n\right) \\
    & \leq C C_{d-2}f_{d-2}(\gamma(n,b_n),1/z_n) 
    \exp\left\{-\frac{z_n^2}{1+|r_{n,h,l}|}\right\} \cr
    & \quad \times
      \exp\left\{-\left(\frac{d-2}{2} - 2C_{d-2} \gamma(n,b_n)
        \right)(1-3\gamma(n,b_n))^2z_n^2\right\} \\
    & \leq C_df_{d-2}(\gamma(n,b_n),1/z_n) \\
    &\quad \times \exp\left\{-\left(\frac{d}{2} - (2C_{d-2}+3(d-2))
        \gamma(n,b_n) -|r_{n,h,l}| \right)z_n^2\right\}\\
    & \leq C_df_{d-2}(\gamma(n,b_n),1/z_n)
         \exp\left\{-\left(\frac{d}{2} - C_d
        \gamma(n,b_n) \right)z_n^2\right\}.
  \end{align*}
  It follows that
  \begin{equation}
  \label{eq:2.5} \seqla
  \begin{aligned}
  & \sum_{J \in \mathcal{J}(d,b_n)} |Q_J-Q_d(I_d;z_n)| \cr 
  & \leq C_d f_{d-2}(\gamma(n,b_n),1/z_n) \cr 
  & \quad \times \sum_{J \in \mathcal{J}(d,b_n)} \sum_{i,j\in J;\, i \neq j}
  \exp\left\{-\left(\frac{d}{2} - C_d \gamma(n,b_n) \right)z_n^2\right\} |r_{n,i,j}| \cr 
  & \leq C_d f_{d-2}(\gamma(n,b_n),1/z_n) s_n^{d-2} \cr 
  & \quad \times \sum_{i,j\in \mathcal{I}_n}^\ast 
  \exp\left\{-\left(\frac{d}{2} - C_d \gamma(n,b_n) \right)z_n^2\right\} |r_{n,i,j}|, 
  \end{aligned}     
  \end{equation}
  where the sum $\sum_{i,j\in\mathcal{I}_n}^{\ast}$ is over all the
  pair $(i,j)$ such that $|r_{n,i,j}|\leq\gamma(n,b_n)$.  Under the
  assumption (B1), we have
  \begin{equation}
    \label{eq:2.6} \seqla
    \begin{aligned}
      & \sum_{J \in \mathcal{J}(d,b_n)} |Q_J-Q_d(I_d;z_n)| \cr 
      & \leq C_d f_{d-2}(\gamma(n,b_n),1/z_n) (\log s_n)^{d/2} \gamma(n,b_n) 
      \exp\left\{  C_d\gamma(n,b_n)(\log s_n)\right\}
    \end{aligned}
  \end{equation}
  Since $\lim_{n \to \infty} \gamma(n,b_n) \log b_n =0$, it also holds
  that $\lim_{n \to \infty} \gamma(n,b_n) \log s_n =0$. Note that
  $\lim_{n\to\infty} (\log s_n)^{1/2}/z_n = 2^{-1/2}$, it follows that
  $\lim_{n\to\infty} f_{d-2}(\gamma(n,b_n),1/z_n) (\log
  s_n)^{d/2-1} = 2^{-d/2+1}$. Therefore, the term in (\ref{eq:2.6})
  converges to zero, and the theorem holds under (B1).

  Alternatively, if (B2) is true, from (\ref{eq:2.5}) we have
  \begin{equation*}
    \begin{aligned}
      & \sum_{J \in \mathcal{J}(d,b_n)} |Q_J-Q_d(I_d;z_n)| \cr 
      & \leq C_d f_{d-2}(\gamma(n,b_n),1/z_n) s_n^{-2} (\log s_n)^{d/2} 
      \sum_{i,j\in\mathcal{I}_n}^{\ast} \exp\left\{  C_d\gamma(n,b_n)(\log s_n)\right\}|r_{n,i,j}| \cr
      & \leq C_d f_{d-2}(\gamma(n,b_n),1/z_n) s_n^{-1} (\log s_n)^{d/2} \exp\left\{  C_d\gamma(n,b_n)(\log s_n)\right\}
      \left(\sum_{i,j\in\mathcal{I}_n} r_{n,i,j}^2\right)^{1/2} \cr
      & \leq C_d  s_n^{-\delta/2} (\log s_n) \exp\left\{  C_d\gamma(n,b_n)(\log s_n)\right\} = o(1),
    \end{aligned}
  \end{equation*}
  and the proof is complete.
\end{proof}

Now we give the proof of Lemma~7.
\begin{proof}[Proof of Lemma~7]
  In the proof of Theorem~\ref{thm:normal_comparison_2}, the upper
  bounds on $Q_J$ and $|Q_J-Q(I_d;z_n)|$ are expressed through the
  absolute values of the covariances, so we can obtain the same bounds
  for probabilities of the form $P(\cap_{1\leq i \leq d}
  \{(-1)^{a_i}X_{t_i} \geq z_n\})$ for any $(a_1,\ldots,a_d) \in
  \{0,1\}^d$. Based on this observation, Lemma~7 is an immediate
  consequence of Lemma~\ref{thm:normal_comparison_2}.

\end{proof}

\renewcommand{\baselinestretch}{1}

\bibliographystyle{plainnat}
\bibliography{mybib}